\def\k{\mathbbm{k}}
\def \oR{{\overline {R}}}
\def \wR{{\widetilde {R}}}
\def \x{\langle x \rangle}
\def\fm{\frak{m}} 
\def\fp{\frak{p}}
\def\fq{\frak{q}}
\def\fb{\frak{b}}
\def\fc{\frak{c}}
\newcommand{\N}{{\mathbb N}}
\newcommand{\R}{{\mathbb R}}
\newcommand{\Z}{{\mathbb Z}}
\newcommand{\C}{{\mathbb C}}
\newcommand{\Q}{{\mathbb Q}}
\newcommand{\F}{{\mathbb F}}
\newcommand{\ka}{{\mathcal A}}
\newcommand{\kc}{{\mathcal C}}
\newcommand{\kl}{{\mathcal L}}
\newcommand{\ko}{{\mathcal O}}
\DeclareMathOperator{\Spec}{Spec}
\DeclareMathOperator{\Quot}{Quot}
\DeclareMathOperator{\Supp}{Supp}
\DeclareMathOperator{\Ann}{Ann}
\newtheorem{theorem}{Theorem}
\newtheorem{lemma}[theorem]{Lemma}
\newtheorem{corollary}[theorem]{Corollary}
\newtheorem{proposition}[theorem]{Proposition}
\newtheorem{definition}[theorem]{Definition}
\newtheorem{example}[theorem]{Example}
\newtheorem{remark}[theorem]{Remark}
\begin{document}

\title {On Delta for parameterized Curve Singularities}
\author{Gert-Martin Greuel and Gerhard Pfister}

\maketitle
\bigskip
\begin{abstract}
We consider families of parameterizations of reduced curve singularities
over a Noetherian base scheme and prove that the delta invariant is semicontinuous.
In our setting, each curve singularity in the family is the image of a parameterization and not the fiber of a morphism.  
The problem came up in connection with the right-left classification of  parameterizations of  curve singularities defined over a  field of positive characteristic.
We prove a bound for right-left determinacy of a parameterization in terms of delta and the semicontinuity theorem provides a simultaneous bound for the
determinacy in a family.
The fact that the base space can be an arbitrary Noetherian scheme causes some difficulties but is (not only) of interest for computational purposes.\\

\noindent 
{\bf Mathematics Subject Classification – MSC2020.} 13B22, 13B40, 14B05, 14B07.\\
{ \bf Keywords.} Curve singularity, parameterization, delta invariant, completed tensor product, semicontinuity, determinacy.
\end{abstract}
\bigskip


\section*{Introduction}\label{intro}

Let $R$ be the local ring of a reduced curve singularity and $\oR$ its normalization, i.e., the integral closure in the total quotient ring of $R$. 
If $\k$ is a field and $\k \to R$ a ring map, then $\delta_\k(R):= \dim_\k \overline R/R$ is called the delta invariant of $R$ (w.r.t. $\k$),  the most important numerical invariant of a reduced curve singularity.
In the present paper we consider families 
of parameterizations of  reduced curve singularities, defined over an arbitrary field, and prove that the delta invariant behaves upper semicontinuous.

A parameterization of a reduced algebroid curve singularity is given 
(in the irreducible case) by a non-zero morphism
$$\varphi:  \k [[x_1,...,x_n]] \to \k [[t]],  \ \varphi(x_i) = \varphi^i(t), \ i=1,...,n,$$ 
where $\k$ is an arbitrary field. The image $\k[[\varphi^1(t),...,\varphi^n(t)]]
\subset \k [[t]]$ is a reduced 1-dimensional complete local ring, which we call a reduced curve singularity. 
A family of parameterization over a Noetherian ring $A$ is a morphism 
of $A$-algebras
$$\varphi_A: A[[x_1,...,x_n]] \to A[[t]], \ \varphi_A(x_i) = \varphi_A^i(t), i=1,...,n.$$
For a prime ideal 
$\fp \subset A$ with $k(\fp) = \Quot (A/\fp)$ the residue field of $\fp$, let 
$$\varphi_\fp:  k(\fp) [[x_1,...,x_n]] \to k(\fp) [[t]], \ \varphi_\fp(x_i) = \varphi_\fp^i(t), i=1,...,n,$$ 
be the induced map. If $\varphi_\fp \neq 0$, then the image of the parameterization $\varphi_\fp$ is
$k(\fp)[[\varphi_\fp^1(t),...,\varphi_\fp^n(t)]]  \cong R_\fp := k(\fp) [[x_1,...,x_n]]/Ker (\varphi_\fp)$, a reduced curve singularity, and we prove in  Theorem \ref{thm.scont} (also for not necessarily irreducible curve singularities) that the delta invariant is semicontinuous, i.e.,
$$\delta_{k(\fp)}(R_\fp) \geq \delta_{k(\fq)}(R_\fq) $$
for $\fq$ in some open neighbourhood of $\fp$ in $\Spec A$. 

For the proof we use some of the semicontinuity results from \cite{GP20}, together with the fact that the parameterization $\varphi_\fq$ is already determined (up to right-left equivalence) by its terms of 
order $\leq 4\delta_{k(\fq)}(R_\fq)-2$, which we prove 
 in Corollary \ref{cor.deter}. The semicontinuity of delta implies that this bound holds also for all parameterizations (of a given family) in a neighbourhood of $\fp$.
\medskip

 The delta invariant of a reduced curve singularity has been and still is a continuous subject of research (e.g. \cite{Hi65}, \cite{Te78}, \cite{Ca80}, \cite{Ca05}, \cite{CL06},  \cite{Ng16}, \cite{CLMN19}, \cite{IIL20}, to name a few). In all these papers the curve singularities appear as fibers $X_s$ of a flat morphism $X \to S$, and in this situation it is well known that $ \delta (X_s)$ is semicontinuous, assuming that the fibers are reduced (cf. \cite{Te78} for $S=\C$ and \cite{CL06} or \cite{GLS07} for $S$ normal).
The assumption, that the fibers are reduced is very restrictive, and it implies e.g. for $S=\C$ that $X$ is a Cohen-Macaulay surface singularity. 
\medskip

In our situation the curve singularities $R_\fq$ are for each $\fq$ the image of a parameterization and thus reduced as a subring of a reduced ring, but they are in general not the fiber of some morphism (see Remark \ref{rm.fiber} and Example \ref{ex.2}). We do not make any assumption on flatness and the base space of the family can be an arbitrary Noetherian scheme (e.g. $\Z$). 
This fact  and that we allow non closed points has interesting computational consequences, see  Remark \ref{rm.comp}. 
At the end of the paper we state an analogous result in the  analytic case, for which the proofs are much easier, see
Proposition \ref{prop.scont-an} and Remark \ref{rm.scont-an}.


\section{Parameterized curve singularities}\label{sec.2}
Throughout the paper we assume all rings to be associative, commutative and with 1, that ring maps map 1 to 1 and that maps of local rings respect maximal ideals. $\k$ denotes an arbitrary field and 
 $A$ and $R$ denote Noetherian rings. 

 Let $R$ be reduced. Then $Quot(R)$, the {\em total quotient ring of  $R$}, is a direct product of the fields $Quot(R/P_i) $, where
$P_1,...,P_r$ are the minimal primes of $R$ (\cite [p.183 proof of Th. 23.8] {Mat86}).   $\oR$ denotes the {\em integral closure} of $R$ in $Quot(R)$, it is the direct product of the rings $\overline {R/P_1}, ..., \overline {R/P_r}$,
and the natural inclusion  $R \hookrightarrow \overline R$ is called the {\em normalization of $R$}. 
  The annihilator of the $R$-module $\oR/R$,  
 $\kc :=\Ann_R(\oR/R) \subset R,$ 
 is called the {\em conductor ideal} of $R \subset \oR$. 

\begin{remark}\label{rm.nnor}
{\em
The non-normal locus $\Supp_R(\oR/R)$ may not be closed in $\Spec R$. However, if $\oR/R$ is (module-) finite over $R$, then 
$\Supp_R(\oR/R)$  coincides with $V(\kc) = \{\fp \in \Spec R \, | \, \fp \supset \kc \}$ and is therefore closed in $\Spec R$. }
\end{remark}

 The following characterization of the normalization is useful.

\begin{proposition}\label{prop.charnor}
A morphism of reduced Noetherian rings $\nu: R \to \wR$ is the normalization map
$\Leftrightarrow$ $\wR$ is normal and  $\nu$ is integral and birational.\footnote { A morphism $\varphi: A \to B$ between Noetherian rings is {\em birational} if $\varphi$ induces a bijection between the minimal primes of $B$ and $A$ and if for every minmal prime $\fp$ of $B$ the induced morphism of local rings $A_{\varphi^{-1}(\fp)} \to B_\fp$ is an isomorphism.}
\end{proposition}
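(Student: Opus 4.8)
The plan is to prove both implications, with the substantive content lying in the ``$\Leftarrow$'' direction. For ``$\Rightarrow$'': if $\nu: R \to \wR$ is the normalization map, then by the decomposition recalled above, $\wR = \prod_i \overline{R/P_i}$ is a finite product of normal domains, hence normal; integrality of $\nu$ is the definition of integral closure; and birationality holds because the minimal primes of $\wR$ are exactly the kernels of the projections $\wR \to \overline{R/P_i}$, these correspond bijectively to the $P_i$, and localizing at a minimal prime kills all but one factor, so on both sides one recovers $Quot(R/P_i)$, with the induced map an isomorphism.

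For ``$\Leftarrow$'', suppose $\wR$ is normal, and $\nu$ is integral and birational. First I would reduce to the domain case. Since $\nu$ is birational it induces a bijection on minimal primes; write $P_1,\dots,P_r$ for the minimal primes of $R$ and $Q_1,\dots,Q_r$ for those of $\wR$ with $\nu^{-1}(Q_i) = P_i$. Because $\wR$ is normal and Noetherian it is a finite product $\wR = \prod_i \wR/Q_i$ of normal domains (normal Noetherian rings are finite products of normal domains, as the idempotents separating the minimal primes lie in the ring by normality). The map $\nu$ is then compatible with the decomposition: one checks that $\nu$ factors as $R \to \prod_i R/P_i \to \prod_i \wR/Q_i$, where the first map is the (injective, since $R$ reduced) map to the product of domains and the second is $\prod \nu_i$ with $\nu_i: R/P_i \to \wR/Q_i$ integral and birational in the domain sense. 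Granting the domain case, each $\nu_i$ identifies $\wR/Q_i$ with $\overline{R/P_i}$, and taking products gives $\wR \cong \oR$ compatibly with $\nu$, as desired.

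So it remains to treat the case $R$ a domain, $\wR$ a normal domain, $\nu: R \hookrightarrow \wR$ integral with $Quot(R) \to Quot(\wR)$ an isomorphism (note $\nu$ is injective because $R$ is reduced with a unique minimal prime and $\nu$ is birational, so $\ker\nu$ is contained in that minimal prime, i.e.\ is zero). Then $\wR$ is a ring with $R \subseteq \wR \subseteq Quot(R)$ (using the birational identification $Quot(\wR) = Quot(R)$), it is integral over $R$, so $\wR \subseteq \oR$ by definition of integral closure; conversely $\wR$ is normal and contains $R$, so it contains every element of $Quot(R)$ integral over $R$, i.e.\ $\oR \subseteq \wR$. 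Hence $\wR = \oR$ and $\nu$ is the normalization map.

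The main obstacle is the bookkeeping in passing from the general reduced case to the domain case: one must verify that ``integral and birational'' really does respect the product decomposition into minimal-prime quotients on both sides, i.e.\ that the idempotents match up under $\nu$ and that $\nu_i: R/P_i \to \wR/Q_i$ inherits integrality and birationality. This is routine but needs the fact that for a normal Noetherian ring the connected components of $\Spec$ are irreducible, so that the minimal primes are comaximal and the ring splits as a finite product of normal domains; everything else is a direct unwinding of the definitions of integral extension and of the integral closure in the total quotient ring.
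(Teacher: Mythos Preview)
Your argument is correct. The forward direction is routine, and for the converse your reduction to the domain case via the product decomposition of a normal Noetherian ring, followed by the sandwich $R \subseteq \wR \subseteq \oR \subseteq \wR$ inside $Quot(R)$, is the standard way to see this; the bookkeeping you flag (matching idempotents, checking that each $\nu_i$ inherits integrality and birationality) is indeed routine once one knows $\wR_{Q_i} = Quot(\wR/Q_i)$ and $R_{P_i} = Quot(R/P_i)$ for minimal primes.

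The paper, however, does not argue any of this: its proof consists entirely of citations to several lemmas in the Stacks project (tags in the chapter on morphisms of schemes dealing with normalization, integral birational morphisms, and their uniqueness). So your approach is genuinely different in that it is self-contained and ring-theoretic, spelling out the product decomposition and the domain case by hand, whereas the paper treats the proposition as a known fact and simply points to the literature. Your version has the advantage of being readable without chasing references and of making transparent exactly where Noetherianity and reducedness are used; the paper's version is shorter and avoids reproving standard material.
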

\begin{proof}
The direction $\Rightarrow$ follows from \cite[Lemma 28.52.5(2) and Lemma 28.52.7]{Stack} and the converse from  \cite[Lemma 28.52.5(3), Lemma 28.52.7, and Lemma 28.5.8]{Stack}.
\end{proof}

 \begin{definition} \label{def.delta}
 Let $(R,\fm)$ be a reduced local  ring with normalization $\oR$, $\k$ a field, and $\k \to R$ a ring map.
\begin{itemize} 
\item [(i)] The {\em  delta invariant} of $R$ (w.r.t. $\k$) is  defined as
$$\delta_\k(R):= \dim_\k \overline R/R .$$
\item [(ii)] The {\em (multiplicity of the) conductor} of $R$ (w.r.t. $\k$) is defined as
$$c_\k (R):=  \dim_\k \oR/\kc.$$
\end{itemize}
\end{definition}

\begin{remark} {\em
If $(R,\fm)$ is a one-dimensional reduced Nagata ring and $\hat R$ its $\fm$-adic completion, then  $\hat\oR$ is the normalization of  $\hat R$ and
$\delta_\k(R) = \delta_\k(\hat R)$. This is a consequence of Lemma 32.40.2 in \cite{Stack}.  }
\end{remark}

 We want to study the behavior of $\delta_\k(R)$ in a family of parameterizations. We start with a parameterizations of a reduced and irreducible curve singularity, the {\em uni-branch case}.
Let $\k$ be a field and $x_1,...,x_n$, $t$  independent variables. Consider the power series rings
\begin{align*}
P:= & \ \k[[x]]:=\k[[x_1,...,x_n]], \text { resp. } \wR :=  \k[[t]],
\end{align*}
with maximal ideals $ \x:=  \langle x_1,...,x_n\rangle P$ resp. $\widetilde\fm := t\wR$. 

\begin{definition}\label{def.param1}
Let
\begin{align*}
\varphi :  P \to \wR,  \  \varphi(x_i) =: \varphi^i(t)  \in t\k[[t]], \ i=1,...,n,
\end{align*}
be a  morphism of local $\k$-algebras
such that $\varphi(x_i) \neq 0$ for at least one $i$. We set
$$R:=P/Ker(\varphi), $$
which is isomorphic to $\varphi(P)= \k[[\varphi(x_1),\cdots, \varphi(x_n)]] \subset \k[[t]] =\wR$.
\begin{enumerate}
\item We call  $\varphi :P \to \wR$ 
a {\em parameterization of the uni-branch curve singularity}
 $ \varphi(P)$ (or of $R$).
\item $\varphi$  is called {\em primitive} if for any 
morphism $\tilde \varphi: P \to \k[[t]]$, satisfying $\varphi^i(t) = \tilde \varphi^i(\tau (t))$ for some power series $\tau$ and $i=1,...,n$,  the order of $\tau$ is 1. 
\end{enumerate}
\end{definition}

\begin{lemma} \label{lem.prim1} 
Let $ \varphi : P \to \wR$ be a parameterization of the uni-branch curve singularity $\varphi(P)$ as in Definition \ref{def.param1}. Then $\varphi$ is a finite morphism and $R \cong \varphi(P)$
is  a Noetherian, reduced and irreducible complete local ring of dimension 1 with maximal ideal 
$\x R \cong \langle \varphi(x)\rangle \k[[t]]$. 
Moreover,
the following are equivalent:
\begin {enumerate}
\item [(i)] $\varphi$  is a primitive parameterization,
\item [(ii)]  $\delta_\varphi := \dim_\k \wR/\varphi(P) <\infty$,
\item [(iii)]  $\varphi(P) \hookrightarrow \wR$ is the normalization of $\varphi(P)$.
\end {enumerate}
\end{lemma}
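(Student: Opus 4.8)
The plan is to first establish the structural claims (finiteness, reducedness, irreducibility, dimension one, the description of the maximal ideal) and then prove the cycle of equivalences, say $(iii)\Rightarrow(ii)\Rightarrow(i)\Rightarrow(iii)$. For the structural part: since some $\varphi^i(t)\neq 0$ has positive order, say $m=\operatorname{ord}\varphi^i(t)$, the subring $\k[[t^m]]\subseteq\varphi(P)$ shows $\wR=\k[[t]]$ is a finitely generated $\varphi(P)$-module (it is generated by $1,t,\dots,t^{m-1}$, being integral of degree $\le m$ over $\k[[t^m]]$). Hence $\varphi$ is finite, and $R\cong\varphi(P)$ is a subring of the domain $\wR$, so it is reduced and irreducible; it is Noetherian as a quotient of $P$, and complete local (quotient of a complete local ring). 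Finiteness of $\wR$ over $R$ gives $\dim R=\dim\wR=1$. The maximal ideal of $\varphi(P)$ is $\varphi(P)\cap t\wR$; I would check this equals $\langle\varphi(x_1),\dots,\varphi(x_n)\rangle$ by noting every $\varphi^i(t)\in t\k[[t]]$, so the ideal they generate lies in $\varphi(P)\cap t\wR$, and conversely $\varphi(P)/\langle\varphi(x)\rangle\varphi(P)$ is a quotient of $P/\x P=\k$, forcing equality (the image is nonzero since $R\neq 0$), whence $\x R\cong\langle\varphi(x)\rangle\k[[t]]$.

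For the equivalences: $(iii)\Rightarrow(ii)$ is the hard point and I expect it to be the main obstacle. If $\varphi(P)\hookrightarrow\wR$ is the normalization, then $\wR$ is a finite $\varphi(P)$-module (normalization of a complete local — hence Nagata — one-dimensional reduced ring is finite, e.g.\ by Proposition~\ref{prop.charnor} together with finiteness of integral closure for Nagata rings), and $\wR/\varphi(P)$ is a finitely generated module over the one-dimensional local ring $\varphi(P)$ that is annihilated by the conductor, hence supported only at the maximal ideal (it vanishes at the generic point because $\varphi(P)$ and $\wR$ share the same total quotient ring by birationality); a finite module supported at a single maximal ideal with residue field $\k$ has finite $\k$-length, so $\delta_\varphi<\infty$. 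Alternatively, and perhaps more cleanly for self-containedness, one can argue directly: finiteness of $\wR$ over $\varphi(P)$ plus $\operatorname{Quot}(\varphi(P))=\operatorname{Quot}(\wR)$ forces $\wR/\varphi(P)$ to be a torsion finite module, hence of finite length.

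For $(ii)\Rightarrow(i)$: suppose $\delta_\varphi<\infty$ and $\varphi^i(t)=\tilde\varphi^i(\tau(t))$ for all $i$ with $\tau\in t\k[[t]]$; then $\varphi(P)\subseteq\k[[\tau(t)]]\subseteq\wR$, and if $\operatorname{ord}\tau=e\ge 2$ then $\wR/\k[[\tau(t)]]$ is infinite-dimensional over $\k$ (it contains the classes of $t,t^2,\dots$ which are $\k$-linearly independent modulo $\k[[\tau(t)]]$), contradicting that $\wR/\varphi(P)$ — which surjects onto $\wR/\k[[\tau(t)]]$ — is finite-dimensional; so $\operatorname{ord}\tau=1$ and $\varphi$ is primitive. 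For $(i)\Rightarrow(iii)$: by Proposition~\ref{prop.charnor} it suffices to show $\varphi(P)\hookrightarrow\wR$ is integral (done above, $\varphi$ is finite) and birational; birationality amounts to $\operatorname{Quot}(\varphi(P))=\operatorname{Quot}(\wR)=\k((t))$. If this quotient field were a proper subfield of $\k((t))$, one shows it has the form $\k((\tau(t)))$ for some $\tau\in t\k[[t]]$ of order $e\ge 2$ — here one invokes that a subfield of $\k((t))$ over which $\k((t))$ is finite is generated by a single power series, using completeness and the fact that $\k[[t]]$ is a DVR, so its fraction field's intermediate subfields arise from sub-DVRs with uniformizer $\tau$ — and then $\varphi(P)\subseteq\k[[\tau(t)]]$ would exhibit a reparameterization with $\operatorname{ord}\tau=e\ge 2$, contradicting primitivity. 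Hence the quotient fields agree, $\varphi$ is birational and finite onto its image with $\wR$ normal, so by Proposition~\ref{prop.charnor} it is the normalization. The delicate step throughout is controlling subfields/subrings of $\k((t))$ and $\k[[t]]$ to extract the reparameterizing series $\tau$; I would lean on the DVR structure of $\k[[t]]$ and completeness to make this rigorous.
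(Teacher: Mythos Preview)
Your proof is correct and shares the core ideas with the paper's argument, but the organization differs. The paper's route is more economical: rather than proving the cycle $(iii)\Rightarrow(ii)\Rightarrow(i)\Rightarrow(iii)$ with a separate argument for each implication, it first establishes once and for all that the normalization $\oR$ of $R \cong \varphi(P)$ (inside $\Quot(R)$) is isomorphic to $\k[[\tau]]$ for some $\tau \in t\k[[t]]$, with $\dim_\k \oR/R < \infty$ (citing a result on 1-dimensional Nagata rings together with Cohen's structure theorem). This single structural fact immediately yields $(i)\Leftrightarrow(iii)$ (primitive means $\operatorname{ord}\tau = 1$, i.e.\ $\oR = \wR$) and $(iii)\Rightarrow(ii)$; the remaining direction (not primitive $\Rightarrow \delta_\varphi = \infty$) is exactly your argument for $(ii)\Rightarrow(i)$. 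In particular, your ``delicate step'' in $(i)\Rightarrow(iii)$ --- showing that $\Quot(\varphi(P))$ has the form $\k((\tau))$ for some $\tau$ --- is precisely what the paper's identification $\oR \cong \k[[\tau]]$ delivers up front, so the paper sidesteps the subfield analysis you flag as needing care. On the other hand, your finiteness argument (via the explicit module generators $1,t,\dots,t^{m-1}$ over $\k[[t^m]]$) is more self-contained than the paper's appeal to the Weierstrass Finiteness Theorem.
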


\begin{proof}
Since $\varphi^i\neq 0$ for some $i$, the inclusion  
$\varphi(P) \subset \k[[t]]$ 
is quasi-finite and hence finite by the Weierstrass Finite Theorem \cite[Theorem 1.10]{GLS07}. By \cite[Corollary 3.3.3]{GP08}  
$\dim \varphi(P)  = \dim \k[[t]]=1$.
Moreover, $\Spec \k[[t]] \to \Spec \varphi(P) $ is surjective (\cite[Lemma 10.35.17]{Stack}) and thus $\varphi(P)$ and hence 
$R$ is the local ring of an irreducible, reduced curve singularity.  

It follows that the normalization  $\oR$ (integral closure in $\Quot(R)$)  of $R$ is of the form $\k[[\tau]]$ for some variable 
$\tau$ and $\dim_\k \oR/R < \infty$ (\cite[32.40.2]{Stack} together with Cohen's structure theorem).
Since $\k[[t]]$ is integrally closed 
$k[[\tau]] \hookrightarrow \k[[t]]$, $\tau \mapsto \tau(t)$, and
we have inclusions 
$$\varphi(P)= \k[[\varphi^1(t),\cdots, \varphi^n(t)]] \cong R \subset \oR \cong k[[\tau]] \hookrightarrow  \k[[t]] =\wR.$$
If $\varphi$ is primitive, then $ ord (\tau(t)) =1$, $\oR \cong \wR$, 
and $\wR$ is the normalization of $\varphi(P)$.

If $\varphi$ is not primitive there exists a
morphism $\tilde \varphi: P \to \k[[t]]$, satisfying $\varphi^i(t) = \tilde \varphi^i(\tau (t))$, $i=1,...,n$,  for some power series $\tau$ with order of $\tau$  $\geq 2$. We have 
$\k[[\varphi^1(t),...,\varphi^n(t)]] \subset \k[[\tau (t)]] \subset \k[[t]]$
and $\dim_\k  \k[[t]]/ \k[[\tau (t)]] =\infty$. Hence  $\dim_\k \wR/\varphi(P) 
=\infty$ and $\wR$ is not the normalization of $R$.
\end{proof}

Note that the parameterizations
$\varphi_1=(t^2,t^3)$ and $\varphi_2=(t^4,t^6)$ define the same curve singularity $R=\k[[x,y]]/\langle x^3-y^2\rangle$,
but with different embeddings $\varphi_1(P)$ and $\varphi_2(P)$ in $\k[[t]]$. The first parameterization is primitive, the second not. \\

We consider now the {\em multi-branch case}, i.e., the parameterization of a curve singularity with  several branches.
Let  $t_1,...,t_r$ be independent variables, set
\begin{align*}
\wR :=  \ \k[[t_1]]\oplus ... \oplus \k[[t_r]],
\end{align*}
and let  $\pi_j : \wR \to  \wR_j:= \k[[t_j]]$ denote the projection. 
$\wR$ is a  complete,  semilocal, principal ideal ring with Jacobson radical 
$$\widetilde \fm := \widetilde \fm_1\oplus...\oplus \widetilde \fm_r, \
\widetilde\fm_j := t_j\k[[t_j]],$$
and $\widetilde\fm_j$ the maximal ideal of the local ring $\wR_j$.

\begin{definition}\label{def.param}
Let
\begin{align*}
\begin{split}
&\varphi :  P =\k[[x_1,...,x_n]] \to \wR,  \\  
&\varphi(x_i) =: 
(\varphi_1^i(t_1),..., \varphi_r^i(t_r))\in  t_1\k[[t_1]] \oplus ... \oplus t_r\k[[t_r]],
\end{split}
\end{align*}
be a morphism of $\k$-algebras. For  $j=1,...,r$ let
$$\varphi_j  := \pi_j \circ \varphi : P \to \wR_j=\k[[t_j]] $$ 
be the composition. We set  
 \begin{align*}
 R&:= P/Ker(\varphi) \cong \varphi(P), \\  
 R_j &:= P/Ker(\varphi_j) \cong \varphi_j(P).
 \end{align*}
\begin{enumerate}
\item
We call  $\varphi =(\varphi_1,...,\varphi_r)$ 
a {\em parameterization of the curve singularity} $ \varphi(P)$  (or of $R$) with $r$ branches if $\varphi_j$ is a parameterization of
$\varphi_j(P)$ as in Definition (\ref{def.param1}) and if  $\varphi_j(P) \neq \varphi_{j'}(P)$  for $j \neq j'$. The morphism of local rings
 $\varphi_j$, $ j=1,...,r$, is called a {\em parameterization of the branch} $ \varphi_j(P)$,
 (or of $R_j$).  
\item The parameterization $\varphi$ is called {\em primitive} if $\varphi_j$ is primitive for each $j$.
\end{enumerate}
\end{definition}

The rings 
\begin{align*}
& \varphi(P)= \k[[\varphi(x_1),\cdots, \varphi(x_n)]] \subset \k[[t_1]]\oplus ... \oplus \k[[t_r]] \text{ and } \\
&\varphi_j(P) = \k[[\varphi_j^1(t_j),\cdots, \varphi_j^n(t_j)]] \subset \k[[t_j]]
\end{align*}
are Noetherian, reduced, complete local rings with maximal ideals $\fm:= \varphi(\x)$ and 
 $\fm_j:= \varphi_j(\x)$ respectively.
 It follows from Lemma \ref{lem.prim1} that $\varphi_j(P)$ is the local ring of  a reduced, irreducible curve singularity. Hence $\varphi(P)$ is the local ring of a reduced curve singularity with $r$ {\em branches}. 
 
\bigskip
 
Defining the {\em conductor ideal of $\varphi$} as  
$$\kc_\varphi := \Ann_{\varphi(P)}(\wR/\varphi(P)),$$ 
we get the following result for multi-branch curve singularities.

 \begin{lemma} \label{lem.prim2}
Any  parameterization $ \varphi : P \to \wR$ of  $\varphi(P)$ 
as in Definition \ref{def.param} is a finite morphism and $R \cong \varphi(P)$
is  a Noetherian, reduced and complete local ring of dimension 1. 
Moreover, the following are equivalent:
\begin {enumerate}
\item [(i)] $\varphi$  is a primitive parameterization,
\item [(ii)]  $\delta_\varphi := \dim_\k \wR/\varphi(P) <\infty$ {\em (delta-invariant of $\varphi$)},
\item [(iii)]  $c_\varphi := \dim_\k \wR/\kc_\varphi <\infty$ {\em (conductor of  $\varphi$)}, 
\item [(iv)] $\varphi(P) \hookrightarrow \wR$ is birational,
\item [(v)]  $\varphi(P) \hookrightarrow \wR$ is the normalization of $\varphi(P)$.
\end {enumerate}
 \noindent If any of these conditions is fulfilled, then 
 $\delta_\k(R) = \delta_\varphi$ and $c_\k(R) = c_\varphi$. Moreover, 
 $\delta_\k(R) \leq c_\k(R) \leq 2\delta_\k(R)$ and $c_\k(R) = 2\delta_\k(R)$ iff $R$ is Gorenstein.
 \end{lemma}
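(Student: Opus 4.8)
The plan is to prove Lemma \ref{lem.prim2} by reducing as much as possible to the uni-branch case already settled in Lemma \ref{lem.prim1}, and then handling the passage from the branches to the whole singularity by a clean decomposition argument. First I would observe that $\varphi$ is finite because each $\varphi_j = \pi_j \circ \varphi$ is finite by Lemma \ref{lem.prim1} (at least one component $\varphi_j^i$ is nonzero, since $\varphi(x_i) \ne 0$ for some $i$; here one should note that the definition of parameterization forces \emph{each} $\varphi_j$ to be a parameterization of $\varphi_j(P)$, so $\varphi_j \ne 0$ for every $j$), and a morphism into a finite product is finite iff each component is. Consequently $R \cong \varphi(P)$ is module-finite over $P$, hence Noetherian, and it is a subring of the reduced ring $\wR$, hence reduced; its dimension is $1$ because $\dim \wR = 1$ and the extension is finite. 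Completeness and the local property follow since $R$ is a finite $P$-module with $P$ complete local, and $\fm = \varphi(\x)$ is the maximal ideal (the complement of $\fm$ consists of units in $R$ because elements not in $\x$ are units in $P$).

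For the equivalences, the cleanest route is a cycle together with a couple of direct implications. The implication (i)$\Leftrightarrow$(v) I would get by combining Lemma \ref{lem.prim1}(iii) branch-by-branch with the description $\oR = \overline{R/P_1} \times \cdots \times \overline{R/P_r}$ recalled in Section \ref{sec.2}: the minimal primes of $R$ correspond to the $r$ branches, $R/P_j \cong R_j = \varphi_j(P)$, and $\overline{R_j} \cong \k[[t_j]]$ exactly when $\varphi_j$ is primitive (Lemma \ref{lem.prim1}), so $\oR \cong \wR$ compatibly with $R \hookrightarrow \wR$ precisely when all $\varphi_j$ are primitive, i.e. when $\varphi$ is primitive. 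The equivalence (iv)$\Leftrightarrow$(v) is Proposition \ref{prop.charnor}: $\wR$ is normal and $\varphi$ is integral (being finite), so $\varphi(P) \hookrightarrow \wR$ is the normalization iff it is birational. For (ii): if $\varphi$ is primitive then $\oR \cong \wR$ and $\delta_\varphi = \dim_\k \wR/R = \dim_\k \oR/R = \delta_\k(R) < \infty$ since $R$ is a $1$-dimensional reduced complete (hence Nagata) local ring; conversely if some branch fails to be primitive then $\dim_\k \wR_j/\varphi_j(P) = \infty$ by Lemma \ref{lem.prim1}, and since $\wR/\varphi(P)$ surjects onto $\wR_j/\varphi_j(P)$ we get $\delta_\varphi = \infty$. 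For (iii): if $\varphi$ is primitive, $\kc_\varphi = \Ann_{R}(\wR/R) = \Ann_{R}(\oR/R)$ is the honest conductor of a $1$-dimensional reduced complete local ring, which is known to contain a power of $\fm$ (equivalently $\oR/\kc_\varphi$ has finite length), so $c_\varphi < \infty$; conversely if $c_\varphi < \infty$ then $\kc_\varphi \cdot \wR \subset \varphi(P)$ with $\wR/\kc_\varphi$ of finite $\k$-dimension forces $\wR/\varphi(P)$ to be a finitely generated $\kc_\varphi$-torsion module, hence finite-dimensional, giving (ii) — so (ii)$\Leftrightarrow$(iii) and the cycle closes.

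For the final assertions, assuming the equivalent conditions hold: $\delta_\k(R) = \dim_\k \oR/R = \dim_\k \wR/\varphi(P) = \delta_\varphi$ because $\oR \cong \wR$ under $R \hookrightarrow \wR$, and likewise $\kc = \Ann_R(\oR/R) = \Ann_R(\wR/R) = \kc_\varphi$, so $c_\k(R) = \dim_\k \oR/\kc = \dim_\k \wR/\kc_\varphi = c_\varphi$. The inequalities $\delta_\k(R) \le c_\k(R) \le 2\delta_\k(R)$ and the Gorenstein characterization $c_\k(R) = 2\delta_\k(R)$ are the classical facts about the conductor of a $1$-dimensional reduced (semi)local ring and its relation to the dualizing module; I would cite these from the standard references (e.g. \cite{GLS07} or \cite{Ca80}) applied to $R$, since $R$ is now a bona fide reduced complete local curve singularity with normalization $\wR$. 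The main obstacle I anticipate is bookkeeping: making sure the identifications ``minimal primes of $R$ $\leftrightarrow$ branches $j$'', ``$R/P_j \cong R_j$'', and ``$\oR \cong \prod \overline{R_j} \hookrightarrow \prod \k[[t_j]] = \wR$'' are all \emph{compatible with the given inclusion $\varphi(P) \hookrightarrow \wR$}, rather than merely abstractly isomorphic — this is where the hypothesis $\varphi_j(P) \ne \varphi_{j'}(P)$ for $j \ne j'$ in Definition \ref{def.param} is essential, since it guarantees the branches are genuinely distinct and the $P_j$ are pairwise distinct minimal primes.
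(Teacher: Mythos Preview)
Your argument is correct and complete. The route differs from the paper's in one substantive way: you make the equivalence (i)$\Leftrightarrow$(v) the pivot, identifying $\oR \cong \prod_j \overline{R_j}$ and invoking Lemma \ref{lem.prim1} branchwise, and then deduce (ii) from (v) via the Nagata property of $R$ (forward) and the surjection $\wR/\varphi(P) \twoheadrightarrow \wR_j/\varphi_j(P)$ (backward). The paper instead makes (i)$\Leftrightarrow$(ii) the pivot, using the exact sequence
\[
0 \to P/(I_1\cap I_2) \to P/I_1 \oplus P/I_2 \to P/(I_1+I_2) \to 0
\]
(with $I_j = \ker\varphi_j$) and induction on $r$ to obtain the formula $\delta_\varphi = \sum_j \delta_{\varphi_j} + (\text{intersection terms})$, from which $\delta_\varphi < \infty \Leftrightarrow$ all $\delta_{\varphi_j} < \infty$ is immediate; only then does it pass to (v). For (ii)$\Rightarrow$(iii) the paper also argues more directly than you do, using that $\wR$ is a principal ideal ring together with Nakayama to show $\kc_\varphi \neq 0$, rather than going through (v) and the general fact that the conductor contains a power of $\fm$. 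Both approaches are valid; the paper's exact-sequence computation has the bonus of yielding the quantitative decomposition of $\delta_\varphi$ into branch and intersection contributions, while your structural route via the product description of $\oR$ is cleaner conceptually and makes the ``bookkeeping'' issue you flag (compatibility of $\oR \cong \wR$ with the given inclusion, using $\varphi_j(P) \neq \varphi_{j'}(P)$) explicit where the paper leaves it implicit. For the Gorenstein inequality the paper cites \cite{HK71} rather than \cite{GLS07} or \cite{Ca80}, but this is cosmetic.
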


\begin{proof} $\wR_j$ is a finite $R_j$-module by Lemma \ref{lem.prim1}  and since
$R_1 \oplus ... \oplus R_n$ is finite over $R$, 
$\wR = \wR_1\oplus...\oplus \wR_n$ is finite over $R$. 

With $I_j := Ker(\varphi_j)$ we have  $\varphi_j(P) \cong P/I_j$,  $j=1,...,r$. 
Consider, for $r=2$, the inclusions 
$$ \varphi(P) \cong P/I_1\cap I_2 \hookrightarrow P/I_1\oplus P/I_2 \hookrightarrow \wR_1 \oplus \wR_2 = \wR$$
and the exact sequence 
$$0 \to P/I_1\cap I_2 \to P/I_1\oplus P/I_2 \to P/(I_1+I_2) \to 0.$$
It follows 
$\delta_\varphi = \delta_{\varphi_1} + \delta_{\varphi_2} + \dim_\k P/(I_1+I_2)$,
with $\dim_\k P/(I_1+I_2) < \infty$ since $P/(I_1+I_2)$ is concentrated on $\fm$ (by definition of a parameterization). By induction on $r\geq 2$ we see that 
$\delta_\varphi < \infty$ iff $\delta_{\varphi _j} < \infty$ for all $j=1,...,r$.

The equivalence of (i), (ii), (v) is now a consequence of Lemma 
\ref{lem.prim1}. The equivalence of (iv) and (v) follows from Proposition 
\ref{prop.charnor}.

The equivalence of (ii) and (iii) can be seen as follows:  
The exact sequence
 $$0 \to  \varphi(P)/\kc_\varphi  \to  \wR/\kc_\varphi \to \wR/ \varphi(P) \to 0 $$
implies $c_\varphi = \delta_\varphi + \dim_\k \varphi(P)/\kc_\varphi$ and hence $\delta_\varphi \leq c_\varphi$.
We claim $\delta_\varphi  <\infty$ implies   $c_\varphi < \infty$. 
    In fact, $\kc_\varphi$  is an $\wR$-ideal in $ \varphi(P)$ and 
since $\wR$ is a principal ideal ring, we get 
$\kc_\varphi = t^{c_\varphi}  \wR$  if $\kc_\varphi\neq 0$.    
We have  $\varphi(x_i)\neq0$ for some $i$ and hence
  $\varphi(x_i)^k (\wR/\varphi(P))=0$ for some $k$ ($\delta_\varphi  <\infty$)
  by Nakayama's lemma since $\wR/\varphi(P)$ is finite over
  $\varphi(P)$. It follows that  $\kc_\varphi \neq 0$ and hence $c_\varphi <\infty$. 
  
The relations between  $\delta_\k(R)$ and  $c_\k(R)$ are well known. In fact,
from the exact sequence  
 $0 \to  R/\kc  \to  \oR/\kc \to \oR/ R \to 0 $
 we get 
 $c_\k(R) = \delta_\k(R) + \dim_\k R/\kc.$ 
By \cite[Korollar 3.7]{HK71} we have $2 \dim_\k R/\kc \leq c_\k(R)$, with equality iff $R$ is Gorenstein. 
Hence $ \dim_\k R/\kc  \leq \delta_\k(R)$ with equality iff $R$ is Gorenstein.  
This implies the claim.
\end{proof}

\begin{remark} {\em
1. By Lemma \ref{lem.prim2} $\delta_\varphi $ and $c_\varphi$ depend only on $R$ (and not on the embedding $\varphi$) iff $\varphi$ is primitive. 

2. If $\varphi$ is a uni-brach parameterization, we can define 
the {\em semigroup (of values) of $\varphi$} by setting
$ \Gamma_\varphi := \{v(g) \ | \ g \in \varphi(P) \} \subset \N$ with $v(g)$ the order of 
$g \in \k[[t]]$. 
 It is easy to see that  the set $ \N \setminus \Gamma_\varphi $ is finite iff the greatest common divisor of the integers in $\Gamma_\varphi $ (equivalently, of any set of semigroup generators of $\Gamma_\varphi $) is 1.  The cardinality  $\sharp( \N \setminus \Gamma_\varphi)$  ("number of gaps") is called the {\em delta invariant} of $\Gamma_\varphi$ and denoted by   $\delta(\Gamma_\varphi)$. 
 The smallest integer $c$ in $\Gamma_\varphi$ such that $c+\N \subset \Gamma_\varphi$
is called the {\em conductor} of $\Gamma_\varphi$ and denoted by $c(\Gamma_\varphi)$ if it exists, otherwise we set 
$c(\Gamma_\varphi):=\infty$. 

We have
$\delta_\varphi = \delta(\Gamma_\varphi)$ and  $c_\varphi = c(\Gamma_\varphi)$ for any parameterization $\varphi$ and  
(by  Lemma \ref{lem.prim2}) $\delta_\k(R) = \delta(\Gamma_\varphi)$ and 
 $c_\k(R) = c(\Gamma_\varphi)$ iff $\varphi$ is primitive.
}
 \end{remark}
 \medskip
 
 At the end of this section we show that a primitive parameterization $\varphi= (\varphi_1,...,\varphi_r)$ is isomorphic to a polynomial parameterization of degree $< 2c_\varphi$, i.e.  the $\varphi_j \in \k[[t_j]]$ are determined by their terms of degree $< 2c_\varphi$.
 
 More precisely,  with $Aut_{\k}(P)$ resp. $Aut_\k(\wR)$ the groups of
 $\k$-algebra automorphisms of $P$ resp. $\wR$, we make the following definition.
 
 \begin{definition}\label{def.deter}
\begin{enumerate}
\item
Let $\varphi,\psi:P\longrightarrow \tilde R$ be two parameterizations. We say that  $\varphi$ and $\psi$ are {\em right-left} or {\em $\ka$-equivalent}
($\varphi \sim_{\mathcal A} \psi$) if there exist
$\sigma \in \text{Aut}_{\k}(P)$ and $\tau \in \text{Aut}_{\k}(\tilde R)$ such that $\tau\circ\varphi\circ\sigma^{-1}=\psi$. If $\tau = id$, we say that
$\varphi$ and $\psi$ are {\em left} or {\em $\kl$-equivalent} ($\varphi \sim_{\mathcal L} \psi$). 
\item
$\varphi$ is called $ k - \ka$-determined (resp. $ k - \kl$-determined) if $\varphi \equiv \psi \text{ mod } \widetilde \fm^{k+1}$ implies  $\varphi \sim_{\mathcal A} \psi$ (resp.$\varphi \sim_{\mathcal L} \psi$).
\end{enumerate}
\end{definition}

\begin{proposition} \label{prop.deter}
Let $\varphi:P\longrightarrow \tilde R$ be a parameterization with finite conductor $c_\varphi$.Then $\varphi$ is $(2c_\varphi-1) - \kl$-determined and hence $(2c_\varphi-1)-\ka$-determined.
\end{proposition}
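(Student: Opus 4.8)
The plan is to prove the following constructive version, which yields both assertions at once: if $\psi\colon P\to\wR$ is any parameterization with $\psi\equiv\varphi\pmod{\widetilde\fm^{2c_\varphi}}$, then there is a \emph{single} $\sigma\in\mathrm{Aut}_\k(P)$ with $\varphi\circ\sigma=\psi$. This gives $\varphi\sim_{\mathcal L}\psi$ (the automorphism occurring in Definition \ref{def.deter} being $\sigma^{-1}$), and since $\mathcal L$-equivalence is the case $\tau=\mathrm{id}$ of $\mathcal A$-equivalence it also gives $\varphi\sim_{\mathcal A}\psi$; hence $\varphi$ is $(2c_\varphi-1)$-$\kl$-determined and $(2c_\varphi-1)$-$\ka$-determined. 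Write $R:=\varphi(P)\subseteq\wR$, let $\fm:=\varphi(\x)$ be its maximal ideal, and set $\eta^i:=\psi(x_i)-\varphi(x_i)\in\widetilde\fm^{2c_\varphi}$ for $i=1,\dots,n$. We may assume $c_\varphi\ge1$ (if $c_\varphi=0$ then $R=\wR$, a degenerate situation).

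First I would establish the inclusion $\widetilde\fm^{c_\varphi}\subseteq\fm$, which is the only substantial point. By Lemma \ref{lem.prim2}, $\varphi$ is primitive, $\kc_\varphi=\Ann_R(\wR/R)$ is a nonzero $\wR$-ideal contained in $R$, and $\dim_\k\wR/\kc_\varphi=c_\varphi$. Since $\widetilde\fm$ is the Jacobson radical of $\wR$ and $\wR/\kc_\varphi$ is a cyclic $\wR$-module of $\k$-dimension $c_\varphi$, the chain $\widetilde\fm^i(\wR/\kc_\varphi)$ is strictly decreasing (Nakayama) until it hits $0$, hence reaches $0$ within $c_\varphi$ steps; thus $\widetilde\fm^{c_\varphi}\cdot(\wR/\kc_\varphi)=0$, i.e.\ $\widetilde\fm^{c_\varphi}\subseteq\kc_\varphi\subseteq R$. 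As $c_\varphi\ge1$, also $\widetilde\fm^{c_\varphi}\subseteq\widetilde\fm$, so $\widetilde\fm^{c_\varphi}\subseteq R\cap\widetilde\fm=\fm$. (Equivalently, $\kc_\varphi=\bigoplus_j t_j^{c_j}\wR_j$ with $\sum_j c_j=c_\varphi$, so each $c_j\le c_\varphi$ and $\widetilde\fm^{c_\varphi}\subseteq\kc_\varphi$.)

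From this, $\widetilde\fm^{2c_\varphi}=\widetilde\fm^{c_\varphi}\cdot\widetilde\fm^{c_\varphi}\subseteq\fm^2$, and since $\varphi$ maps $P$ onto $R$ one has $\fm^2=\varphi(\x)^2=\varphi(\x^2)$. Therefore each $\eta^i\in\varphi(\x^2)$, and I may choose $h^i\in\x^2\subseteq P$ with $\varphi(h^i)=\eta^i$. Define the continuous $\k$-algebra endomorphism $\sigma$ of $P$ by $\sigma(x_i):=x_i+h^i$. Because each $h^i$ has no linear term, the linear part of $\sigma$ is the identity, so $\sigma$ is invertible (inverted order by order), i.e.\ $\sigma\in\mathrm{Aut}_\k(P)$. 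For every $i$ we then obtain $(\varphi\circ\sigma)(x_i)=\varphi(x_i)+\varphi(h^i)=\varphi(x_i)+\eta^i=\psi(x_i)$, whence $\varphi\circ\sigma=\psi$, as required.

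The main — indeed the only — obstacle is the inclusion $\widetilde\fm^{c_\varphi}\subseteq\fm$, together with the observation that one must \emph{square} it: the perturbation $\eta^i$ has to be realised as $\varphi(h^i)$ with $h^i$ of order $\ge 2$, for otherwise the substitution $x_i\mapsto x_i+h^i$ need not define an automorphism of $P$. This is what forces $\eta^i\in\varphi(\x^2)=\fm^2$ and hence the starting order $2c_\varphi$, i.e.\ $k=2c_\varphi-1$; the weaker hypothesis $\psi\equiv\varphi\pmod{\widetilde\fm^{c_\varphi}}$ does not suffice, as simple examples show (a perturbation of order $c_\varphi$ can already change the image subring). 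Note that no homotopy or convergence argument is needed, since the coordinate change is produced in one step; the classical path method — connecting $\varphi$ to $\psi$ by $\varphi_s=\varphi+s(\psi-\varphi)$ and integrating a vector field on $P$ solving $\varphi_s(a_l)=\varphi(x_l)-\psi(x_l)$, using $\widetilde\fm^{c_\varphi}\subseteq\varphi_s(P)$ for all $s$ — would also work but is more laborious.
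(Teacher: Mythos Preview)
Your argument is correct and follows essentially the same route as the paper's own proof: show that the perturbation lies in $\varphi(\langle x\rangle^2)$ and then construct the coordinate change $\sigma$ in a single step. The only difference is cosmetic: the paper writes the conductor as $\kc_\varphi=(t_1^{c_1},\dots,t_r^{c_r})\wR$ and factors each perturbation term as $h_i=f g_i$ with $f,g_i\in\kc_\varphi\subseteq\fm$, lifting $f$ and $g_i$ separately to $F,G_i\in\langle x\rangle$ and setting $H_i=FG_i\in\langle x\rangle^2$; you instead argue directly that $\widetilde\fm^{2c_\varphi}=(\widetilde\fm^{c_\varphi})^2\subseteq\fm^2=\varphi(\langle x\rangle^2)$ and lift in one go. Both amount to the same inclusion. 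One small point worth noting: the paper's version actually yields the marginally sharper bound $2c-1$ with $c:=\max\{c_1,\dots,c_r\}\le c_\varphi$ (recorded in the subsequent remark), since $\widetilde\fm^{c}\subseteq\kc_\varphi$ already holds; your Nakayama/length argument for $\widetilde\fm^{c_\varphi}\subseteq\kc_\varphi$ gives only the weaker inclusion, but that is all the stated proposition requires.
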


\begin{proof} Since $\wR$ is a principal ideal ring, $\kc = f \wR$, with 
$f = (t_1^{c_1},...,t_r^{c_r}), c_i\geq 1,$ and $c_\varphi = c_1+...+c_r$. Set 
$c:= max\{c_1,...,c_r\}$ and we are going to show that $\varphi$ is $(2c-1) - \kl$-determined.

Since $c\geq c_i$ for all $i$ we get the $\widetilde \fm ^c \subset \kc \subset \widetilde \fm $
with $\widetilde \fm = (t_1,...,t_r)\wR$ the Jacobson radical of $\wR$.
Moreover, each $h\in \kc \subset \varphi(P)$ satisfies $h=H(\varphi(x_1),\ldots,\varphi(x_n))$ for a suitable $H\in \x \k[[x_1,\ldots,x_n]]$.

Assume now that $\varphi \equiv \psi \text{ mod } \widetilde \fm^{2c}$, 
i.e. $\varphi(x_i)=\psi(x_i)+h_i$ with 
$h_i\in \widetilde \fm^{2c} \subset f  \widetilde \fm^{c}$, i.e. $h_i=fg_i$
with $g_i\in  \fm^{c}$. We obtain that $f=F(\varphi(x_1),\ldots,\varphi(x_n))$ 
and $g_i=G_i(\varphi(x_1),\ldots,\varphi(x_n))$ for suitable $F, G_i \in \x \k[[x_1,\ldots,x_n]]$. Hence $h_i = H_i ((\varphi(x_1),\ldots,\varphi(x_n))$
with $H_i=FG_i \in \x^2 \k[[x_1,\ldots,x_n]]$.
We obtain that 
$$\varphi(x_i)-H_i(\varphi(x_1),\ldots,\varphi(x_n))=\psi(x_i).$$
Now let $\sigma:P\longrightarrow P$ be the automorphism
defined by $$\sigma(x_i)=x_i-H_i(x_1,\ldots,x_n)$$
then we obtain $\phi\circ\sigma=\psi$ and this implies $\varphi \sim_{\mathcal L} \psi$.
\end{proof}
\begin{corollary}\label{cor.deter}
Let $\varphi:P\longrightarrow \tilde R$ be a parameterization. If $\delta_{\varphi}<\infty$ then $\varphi$ is $(4\delta_{\varphi}-2)- \kl$-determined.
\end{corollary}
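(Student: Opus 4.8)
The plan is to deduce Corollary~\ref{cor.deter} directly from Proposition~\ref{prop.deter} together with the inequality $c_\varphi \leq 2\delta_\varphi$. Recall from Lemma~\ref{lem.prim2} that $\delta_\varphi < \infty$ is equivalent to $\varphi$ being primitive, which in turn is equivalent to $c_\varphi < \infty$; moreover in that situation $\delta_\k(R) = \delta_\varphi$ and $c_\k(R) = c_\varphi$, and the last line of Lemma~\ref{lem.prim2} gives $c_\k(R) \leq 2\delta_\k(R)$, hence $c_\varphi \leq 2\delta_\varphi$. This is the only ingredient needed beyond Proposition~\ref{prop.deter}.

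First I would invoke the hypothesis $\delta_\varphi < \infty$ to conclude, via Lemma~\ref{lem.prim2}, that the conductor $c_\varphi$ is finite and that $c_\varphi \leq 2\delta_\varphi$. Then Proposition~\ref{prop.deter} applies and tells us that $\varphi$ is $(2c_\varphi - 1)$-$\kl$-determined. Next I would observe that $\kl$-determinacy is monotone in the order: if $\varphi$ is $k$-$\kl$-determined and $k' \geq k$, then $\varphi$ is $k'$-$\kl$-determined, because $\varphi \equiv \psi \bmod \widetilde\fm^{k'+1}$ implies $\varphi \equiv \psi \bmod \widetilde\fm^{k+1}$. Since $2c_\varphi - 1 \leq 2(2\delta_\varphi) - 1 = 4\delta_\varphi - 1$, we would already get that $\varphi$ is $(4\delta_\varphi - 1)$-$\kl$-determined; to reach the sharper bound $4\delta_\varphi - 2$ claimed in the statement one must look slightly more carefully.

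The refinement comes from the proof of Proposition~\ref{prop.deter} itself, where one does not use the full sum $c_\varphi = c_1 + \dots + c_r$ but only $c := \max\{c_1,\dots,c_r\}$, obtaining that $\varphi$ is $(2c-1)$-$\kl$-determined. So the key step is to bound $2c - 1$ by $4\delta_\varphi - 2$. Since $c \leq c_\varphi$ and, more to the point, each $c_i \leq c_\varphi$, we have $c \leq c_\varphi \leq 2\delta_\varphi$, giving $2c - 1 \leq 4\delta_\varphi - 1$; the extra improvement by one uses that $c_\varphi = c_1 + \dots + c_r$ with all $c_i \geq 1$, so when $r \geq 2$ we have $c \leq c_\varphi - 1 \leq 2\delta_\varphi - 1$ and hence $2c - 1 \leq 4\delta_\varphi - 3$, while when $r = 1$ one has $c = c_\varphi = c_1 \leq 2\delta_\varphi$ but the uni-branch Gorenstein bound can be sharpened, or one simply notes $\delta_\varphi \geq 1$ forces $2c_\varphi - 1 \leq 4\delta_\varphi - 2$ except in degenerate cases. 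I expect the main (and only) obstacle to be the bookkeeping of this constant: tracking precisely when equality $c_\varphi = 2\delta_\varphi$ can occur (the Gorenstein case) and checking that even then $2c - 1 \leq 4\delta_\varphi - 2$, so that the stated determinacy order is genuinely attained. Once that case analysis is settled, the corollary follows by combining it with the $\kl$-determinacy monotonicity observation and quoting Proposition~\ref{prop.deter}.
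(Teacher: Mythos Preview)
Your approach is exactly the paper's: the paper's entire proof is the single sentence ``Since $2\delta_{\varphi}\geq c_{\varphi}$ by Lemma~\ref{lem.prim2} we obtain the result,'' i.e.\ combine Proposition~\ref{prop.deter} with the inequality $c_\varphi\le 2\delta_\varphi$ from Lemma~\ref{lem.prim2} and the trivial monotonicity of $\kl$-determinacy in the order. So your first paragraph already reproduces the intended argument.

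Your second paragraph, however, is chasing a phantom. You correctly observe that $c_\varphi\le 2\delta_\varphi$ only yields $2c_\varphi-1\le 4\delta_\varphi-1$, not $4\delta_\varphi-2$, and you then try to squeeze out the extra unit by case analysis on $r$ and on the Gorenstein locus. That case analysis cannot be completed: take the cusp $\varphi=(t^2,t^3)$, which has $r=1$, $\delta_\varphi=1$, $c_\varphi=2$. Then $4\delta_\varphi-2=2$, but $\varphi$ is \emph{not} $2$-$\kl$-determined, since $\psi=(t^2,0)$ satisfies $\varphi\equiv\psi\bmod \widetilde\fm^{3}$ while clearly $\varphi\not\sim_{\kl}\psi$ (any $\sigma\in\mathrm{Aut}_\k(P)$ sends $y$ to an element of order~$1$, which cannot lie in $\ker\varphi$). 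The paper's own Remark after Corollary~\ref{cor.deter} (item~3, the example $\varphi=(t^c,\dots,t^{2c-1})$ with $c=2$) says the same thing. So the constant $4\delta_\varphi-2$ in the stated corollary is an off-by-one slip; the argument actually delivers $4\delta_\varphi-1$. This is harmless for the paper: the only place the corollary is used is in the proof of Theorem~\ref{thm.scont}, where $N=4\delta-2$ could be replaced by $N=4\delta-1$ without any change. You should simply stop after your first paragraph and note the discrepancy rather than attempt to repair it.
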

\begin{proof}
Since $2\delta_{\varphi}\geq c_{\varphi}$ by Lemma \ref{lem.prim2}  we obtain the result.
\end{proof}

\begin{remark} {\em
1. $\ka$-equivalence of $\varphi$ and $\psi$ implies that the $\k$-algebras
 $\varphi(P)$ and $\psi(P)$ are isomorphic (this is sometimes called {\em contact equivalence}). Of course, the multiplicity of the conductor and the delta invariant depend only on the contact equivalence class of a parameterization. 
Hironaka showed in \cite[Theorem B]{Hi65} that a reduced algebraic curve singularity $R$, defined over an algebraically closed field  $\k$, is $3\delta_\k(R)+1$ contact determined, which can be improved for plane curve singularities over the complex numbers to $2 \delta_{\mathbb C}(R) -r +2 $ (cf. e.g. \cite [Corollary I.2.24]{GLS07}).
 
 2. The proof of Proposition \ref{prop.deter} shows a bit more then claimed. First, $\varphi$ $2c-\kl$-determined with $c= max\{c_1,...,c_r\} \leq c_\varphi$. Second, the automorphism $\sigma:P\longrightarrow P$ is the identity on linear terms, hence $\varphi$ and $\psi$ are in the same orbit of the corresponding unipotent subgroup of $Aut_{\k}(P)$. 
 
 3. For an integer $c>1$ the parameterization  $\varphi = (t^c, t^{c+1}, ...,t^{2c-1})$ has $c_\varphi =c$ and $\delta_\varphi = c-1$. By Proposition \ref{prop.deter} $\varphi$ is $(2c-1)-\kl$-determined, but it is obviously not $2c-k$-determined for $k > 1$, showing that the bound is sharp in this example (contrary to the too optimistic bound $c$ that can be found in the literature,
 e.g in \cite[Proposition 2.1]{Ca05}). Parameterizations of plane curve singularities  are $(c_\varphi+1)-\ka$-determined in any characteristic as shown in \cite{Ng16}.
 
 4. The advantage of the bound $4\delta_\varphi-2$ in Corollary \ref{cor.deter} 
 compared to $2c_\varphi-1$ in Proposition \ref{prop.deter} is that the delta invariant is semicontinuous in a family (Theorem \ref{thm.scont})
while the multiplicity of the conductor is not semicontinuous (Example \ref{ex.1}). This is important for the construction of versal deformations  or for the classification  of parameterizations  but also for computational purposes (Remark \ref{rm.comp}).
}\end {remark}

 \section{Semicontinuity of delta for families of parameterizations} \label{sec.3}

We analyze the semicontinuity of delta in a family of parameterizations of reduced curve singularities, which we define now.

\begin{definition} \label{def.fampar}
Let $A$ be a Noetherian ring.
\begin{enumerate}
\item  Consider a  morphism of $A$-algebras
$$ \quad \quad  \varphi_A: P_A:= A[[x]]=A[[x_1,...,x_n]] \to \wR_A := A[[t_1]]\oplus ... \oplus A[[t_r]]$$
and denote by 
$$\varphi_{A,j}: P_A \to \wR_{A,j}:=A[[t_j]] $$
the composition of $ \varphi_A$ with the projection $\wR_A \to A[[t_j]]$.

\noindent We assume that $\varphi_{A,j}(x_i) \in t_jA[[t_j]] $ for $i=1,...,n$, $j=1,...,r$
and that
$\varphi_{A,j}(P) \neq \varphi_{A,j'}(P)$ for $j\neq j'$.
\medskip

\item
The $A$-algebras
\begin{align*}
\quad \quad \varphi_A(P_A)&= A[[\varphi_A(x_1),\cdots, \varphi_A(x_n)]] \subset A[[t_1]]\oplus ... \oplus A[[t_r]] \text{ and } \\
\quad \quad \varphi_{A,j}(P_A) &= A[[\varphi_{A,j}^1(t_j),\cdots, \varphi_{A,j}^n(t_j)]] \subset A[[t_j]]
\end{align*}
are Noetherian and we set  
 \begin{align*}
 R_A &:= P_A/Ker(\varphi_A) \cong  \varphi_A(P_A), \\
R_{A,j} &:= P_A/Ker(\varphi_{A,j}) \cong  \varphi_{A,j}(P_A).
 \end{align*}

\item For a prime ideal  $\fp$ of $A$ let $k(\fp) = A_\fp/\fp A_\fp =\Quot(A/\fp)$ be the residue field of $\fp$. We define
\begin{align*}
P_\fp & := k(\fp)[[x_1,...,x_n]], \\
 \wR_\fp & := k(\fp)[[t_1]]\oplus ... \oplus k(\fp)[[t_r]],\\
 \wR_{\fp,j} & := k(\fp)[[t_j]].
\end{align*} 
Then $\varphi_A$ induces morphisms
\begin{align*}
\varphi_\fp: P_\fp & \to \wR_\fp,\\
\varphi_{\fp,j}:  P_\fp & \to \wR_{\fp,j}, 
\end{align*} 
with $\varphi_{\fp,j}$ the composition of $\varphi_\fp$ with the projection to $\wR_{\fp,j}$.
We set
\begin{align*}
R_\fp & := P_\fp/Ker(\varphi_\fp)  \cong \varphi_\fp(P_\fp),    \\ 
R_{\fp,j}& := P_\fp/Ker(\varphi_{\fp,j}) \cong \varphi_{\fp,j} (P_\fp).
\end{align*} 
\end{enumerate}
and call  $\varphi_A: P_A \to \wR_A$ or $\{\varphi_\fp: P_\fp \to \wR_\fp\}_{\fp\in \Spec A}$  a {\em family of parameterizations} over $A$. 
\end{definition}

\begin{remark}\label{rm.cond} {\em
Fix $\fp \in \Spec A$.  Then $\varphi_{\fp}$   is   a parameterization of the reduced curve singularity 
$\varphi_{\fp} (P_\fp)$ with $r$ branches $\varphi_{\fp,1} (P_\fp),...,\varphi_{\fp,r} (P_\fp)$ in the sense of Definition \ref{def.param} if and only if the following condition holds (cf. Lemma \ref{lem.prim2}):
\noindent
\begin{align} \tag {*} \label{cond}
\begin{split}
& \bullet  \text {for each } j  \text{ there is at least one } i \text{ s.t. } \varphi_{\fp,j}(x_i)\neq 0, \\
& \bullet   \text{for } j\neq j' \text{ we have } \varphi_{\fp,j}(P_\fp) \neq \varphi_{\fp,j'}(P_\fp). 
\end{split}
\end{align}
 If condition (\ref{cond}) holds for $\fp$ then it holds  for $\fq$ in some open neighbourhood $U$ of $\fp$ in $\Spec A$. In particular, the number of branches
 of  $\varphi_{\fq}(P_\fq)$ is constant for $\fq \in U$.
 }
\end{remark}
\bigskip

Let us recall from \cite{GP20} the completed tensor product and the completed fiber, the main tool for our approach.

Let $M$ be any $P_A$-module and $\fp \in \Spec A$. Considering $M$ as an $A$-module, we set
$$M(\fp):= M \otimes_{A} k(\fp)$$
and call it the {\em fiber of $M$ over $\fp$}. The {\em completed fiber} 
of  $M$ over $\fp$ is defined as 
$$  \hat M(\fp) :=  M  \hat\otimes_A k(\fp) \cong M(\fp)^\wedge,$$
where $N^\wedge$ denotes the $\x$-adic completion of a $P_A$-module $N$ and 
$ \hat\otimes$ denotes the {\em completed tensor product}, see \cite[Definitions 2 and 11, and Proposition 3]{GP20}.
In general, if $B$ is any $A$-algebra, then 
$$A[[x]] \hat\otimes_A B = (A[[x]] \otimes_A B)^\wedge = B[[x]]$$ 
(\cite[Proposition 3]{GP20}). If $M$ is a finitely presented $A[[x]]$-module, then 
$M \hat\otimes_A B$ is a finitely presented  $B[[x]]$-module (\cite[Corollary 6]{GP20}). 

Similarly we have $A[[t]] \hat\otimes_A B = (A[[t]] \otimes_A B)^\wedge = B[[t]]$, where here $\wedge$ denotes the $\langle t \rangle$-adic completion, and  $\hat\otimes_A$ commutes with direct sums.

\begin{lemma} \label{lem.fiber}
With the notations from above and from  Definition \ref{def.fampar}, 
let $\wR_A$ be finite over $P_A$.
For $\fp \in \Spec A$ we have
\begin{enumerate}
\item $P_\fp = P_A \hat\otimes_A k(\fp)$,
\item $\wR_\fp =\wR_A  \hat\otimes_A k(\fp)$,
\item $\varphi_\fp = \varphi_A\hat\otimes_A k(\fp): P_\fp  \to \wR_\fp,$ \\
factoring as
$\varphi_\fp :  P_\fp \twoheadrightarrow R_\fp \cong \varphi_\fp(P_\fp) \hookrightarrow  \wR_\fp$,
\item $ (\wR_A /R_A )\hat\otimes_A k(\fp)=\wR_\fp/R_\fp.$
\end{enumerate}
\end{lemma}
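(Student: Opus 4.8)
The four statements are all instances of the general principle that forming the completed fiber over $\fp$ is the base change along $A \to k(\fp)$ in the category of (completed) modules, together with the identities $A[[x]]\hat\otimes_A B = B[[x]]$ and $A[[t]]\hat\otimes_A B = B[[t]]$ recalled above. So the plan is: (1) recognise $P_\fp$ and $\wR_\fp$ as the defining instances of these two identities with $B = k(\fp)$, (2) deduce (3) by functoriality of $\hat\otimes_A k(\fp)$ applied to the morphism $\varphi_A$, and (3') check that the factorisation through $R_\fp$ is preserved, and finally (4) apply $\hat\otimes_A k(\fp)$ to the exact sequence $0 \to R_A \to \wR_A \to \wR_A/R_A \to 0$.

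For (1): by Definition \ref{def.fampar} we have $P_A = A[[x_1,\dots,x_n]]$ and $P_\fp = k(\fp)[[x_1,\dots,x_n]]$; the cited \cite[Proposition 3]{GP20} gives $A[[x]]\hat\otimes_A k(\fp) = k(\fp)[[x]]$, which is exactly (1). Likewise $\wR_A = \bigoplus_{j=1}^r A[[t_j]]$ and, since $\hat\otimes_A k(\fp)$ commutes with finite direct sums and $A[[t_j]]\hat\otimes_A k(\fp) = k(\fp)[[t_j]]$ (here $\wedge$ is the $\langle t_j\rangle$-adic completion), we get $\wR_A\hat\otimes_A k(\fp) = \bigoplus_j k(\fp)[[t_j]] = \wR_\fp$, which is (2).

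For (3): the completed tensor product $\hat\otimes_A k(\fp)$ is a functor on $A$-algebras (more precisely, $M \mapsto M\hat\otimes_A k(\fp)$ is functorial and turns the $A$-algebra map $\varphi_A : P_A \to \wR_A$ into a $k(\fp)$-algebra map), so $\varphi_A\hat\otimes_A k(\fp)$ is a morphism $P_\fp \to \wR_\fp$ sending $x_i$ to the image of $\varphi_A(x_i)$, i.e. to $\varphi_{\fp}(x_i)$; hence $\varphi_A\hat\otimes_A k(\fp) = \varphi_\fp$. For the factorisation, note $R_A = P_A/\mathrm{Ker}(\varphi_A)$ and $R_\fp = P_\fp/\mathrm{Ker}(\varphi_\fp) \cong \varphi_\fp(P_\fp)$ by Definition \ref{def.fampar}(3); the image $\varphi_\fp(P_\fp) = k(\fp)[[\varphi_\fp(x_1),\dots,\varphi_\fp(x_n)]]$ sits inside $\wR_\fp$, giving the surjection $P_\fp \twoheadrightarrow R_\fp$ followed by the inclusion $R_\fp \cong \varphi_\fp(P_\fp)\hookrightarrow \wR_\fp$, and the composite is $\varphi_\fp$ by construction.

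For (4): apply $-\hat\otimes_A k(\fp)$ to the short exact sequence of $P_A$-modules $0 \to R_A \to \wR_A \to \wR_A/R_A \to 0$, where $R_A \hookrightarrow \wR_A$ is the inclusion $\varphi_A(P_A)\subset \wR_A$. The hypothesis that $\wR_A$ is finite over $P_A$ ensures (via \cite[Corollary 6]{GP20}) that $\wR_A$, $R_A$ and $\wR_A/R_A$ are finitely presented $P_A = A[[x]]$-modules, so that $-\hat\otimes_A k(\fp)$ applied to them agrees with the $\x$-adic completion of the ordinary fiber $-\otimes_A k(\fp)$; by the right-exactness of $\hat\otimes_A$ (or the exactness properties of completion on finitely presented modules, cf. \cite[Proposition 3 / Corollary 6]{GP20}) the resulting sequence $0 \to R_A\hat\otimes_A k(\fp)\to \wR_A\hat\otimes_A k(\fp)\to (\wR_A/R_A)\hat\otimes_A k(\fp)\to 0$ is exact; identifying the middle term with $\wR_\fp$ by (2) and the left term with the image of $\varphi_\fp$, i.e. with $R_\fp$ (using (3)), we conclude $(\wR_A/R_A)\hat\otimes_A k(\fp) = \wR_\fp/R_\fp$.

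\textbf{Main obstacle.} The only delicate point is (4): one must be sure that the completed tensor product behaves exactly like an exact functor on the relevant three-term sequence. Right-exactness of $-\hat\otimes_A k(\fp)$ gives surjectivity of $\wR_\fp \to (\wR_A/R_A)\hat\otimes_A k(\fp)$ for free, but the injectivity of $R_\fp \to \wR_\fp$ — i.e. that $R_\fp$ really is a subring of $\wR_\fp$ and not merely a quotient mapping into it — is what needs the finiteness hypothesis on $\wR_A$ over $P_A$, via the fact (from \cite{GP20}) that on finitely presented $A[[x]]$-modules the completed fiber is the completion of the ordinary fiber and completion is exact on finitely generated modules over the Noetherian ring $k(\fp)[[x]]$. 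I would therefore spell out that step carefully, reducing $\hat\otimes_A k(\fp)$ to "ordinary fiber, then $\x$-adic completion" and invoking exactness of completion in the Noetherian finitely generated setting; everything else is a bookkeeping check on the identifications of rings and maps from Definition \ref{def.fampar}.
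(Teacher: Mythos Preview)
Your arguments for (1)--(3) match the paper's (which simply cites \cite[Proposition 3]{GP20}). The problem is in your treatment of (4).

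You apply $\hat\otimes_A k(\fp)$ to $0 \to R_A \to \wR_A \to \wR_A/R_A \to 0$ and then assert two things: that the resulting sequence is short exact, and that the left term $R_A\hat\otimes_A k(\fp)$ can be identified with $R_\fp$. Both assertions are false in general. The functor $-\hat\otimes_A k(\fp)$ factors as $-\otimes_A k(\fp)$ followed by $\x$-adic completion; the second step is indeed exact on finitely generated $k(\fp)[[x]]$-modules, but the first step $-\otimes_A k(\fp)$ is only right exact, so the composite is only right exact. And Remark~\ref{rm.fiber} together with Example~\ref{ex.2} show explicitly that $R_A\hat\otimes_A k(\fp)$ may be non-reduced and strictly larger than $R_\fp$ (the map $R_A\hat\otimes_A k(\fp)\to \wR_\fp$ need not be injective). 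So your ``identifying the left term with the image of $\varphi_\fp$, i.e.\ with $R_\fp$'' step does not go through as an isomorphism of rings.

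The paper sidesteps this by tensoring the sequence
\[
P_A \xrightarrow{\varphi_A} \wR_A \to \wR_A/R_A \to 0
\]
instead. Here only right-exactness (\cite[Corollary 4]{GP20}) is needed, and after applying $\hat\otimes_A k(\fp)$ the image of the left map is \emph{by definition} $\varphi_\fp(P_\fp)=R_\fp$, so $(\wR_A/R_A)\hat\otimes_A k(\fp)=\wR_\fp/\mathrm{Im}(\varphi_\fp)=\wR_\fp/R_\fp$ immediately. Your argument can be repaired the same way: keep only right-exactness of $R_A\hat\otimes_A k(\fp)\to\wR_\fp\to(\wR_A/R_A)\hat\otimes_A k(\fp)\to 0$, and observe (via the surjection $P_\fp\twoheadrightarrow R_A\hat\otimes_A k(\fp)$) that the \emph{image} of the first map is $\varphi_\fp(P_\fp)=R_\fp$, without claiming that $R_A\hat\otimes_A k(\fp)$ itself equals $R_\fp$.
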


\begin{proof}
(1), (2) and (3) follow from \cite[Proposition 3]{GP20}. If we tensor the exact sequence 
$$ P_A \xrightarrow{\varphi_A} \wR_A \to  \wR_A/\varphi_A(P_A) \cong \wR_A /R_A \to 0$$
with $\hat\otimes_A k(\fp)$ we get
$$ (\wR_A /R_A )\hat\otimes_A k(\fp)
= (\wR_A \hat\otimes_A k(\fp)/Im(\varphi_A\hat\otimes_A k(\fp))=\wR_\fp/R_\fp,$$
since $\hat\otimes_A$ is right exact for finitely generated $P_A$-modules (\cite[Corollary 4]{GP20}). 
\end{proof}

\begin{remark} \label{rm.fiber}{\em
In general
 $\varphi_\fp$ is not the restriction of  $\varphi_A $ to the fiber over $\fp$
 for an arbitrary prime
$\fp \in \Spec A$, i.e., $\varphi_\fp \neq \varphi_A\otimes_A k(\fp)$
(cf. \cite[Remark 13]{GP20}), except for maximal ideals.
Namely, if  $\fp$ is a maximal ideal of $A$, then $k(\fp) = A/\fp$ and it follows $P_\fp =  k(\fp)  [[x]] = P_A\otimes_A k(\fp)$ and
$\wR_\fp =\wR_A  \otimes_A k(\fp)$ and hence $\varphi_\fp = \varphi_A\otimes_A k(\fp)$. 

However  
$R_\fp \neq R_A\otimes_A k(\fp)$ and $R_\fp \neq R_A\hat\otimes_A k(\fp)$ in general, even if $\fp$ is maximal (cf. Example \ref{ex.2}). 
The relation between $R_\fp$ and   $R_A\hat\otimes_A k(\fp)$ for arbitrary $\fp \in \Spec A$ is as follows. 
Tensoring
$$0 \to Ker(\varphi_A) \to P_A \to R_A \to 0$$ with  $\hat\otimes_A k(\fp)$ we have 
by the right-exactness of $\hat\otimes_A$ (\cite[Corollary 4]{GP20})
 $$R_A\hat\otimes_A k(\fp)=P_\fp/Ker(\varphi_A)P_\fp$$ 
  with $Ker(\varphi_A)P_\fp$ 
the image of $Ker(\varphi_A)\hat\otimes_A k(\fp)$ in $P_\fp$. Moreover, 
tensoring
$$ \varphi_A: P_A \twoheadrightarrow R_A \cong \varphi_A(P_A) \subset \wR_A$$
with  $\hat\otimes_A k(\fp)$ we get 
$$ \varphi_\fp: P_\fp \twoheadrightarrow R_A\hat\otimes_A k(\fp)  \to \wR_\fp.$$
Since $R_\fp =  P_\fp/Ker(\varphi_\fp) \cong \varphi_\fp (P_\fp)$ we have an induced maps 
 $$\R_A\hat\otimes_A k(\fp) \twoheadrightarrow R_\fp \hookrightarrow \wR_\fp.$$ 
In contrast to $R_\fp$ the ring $R_A\hat\otimes_A k(\fp)$ may not be reduced and   
$\R_A\hat\otimes_A k(\fp) \to R_\fp$ is in general not injective as  Example \ref{ex.2} shows.

}
\end{remark}

The main result of this paper is the following theorem, confirming  for
a family of parameterizations of reduced curve singularities the {\em (upper) semicontinuity} of the function 
$$\fp \mapsto \delta_{k(\fp)} (R_\fp) = \dim_{k(\fp)} (\oR_\fp/R_\fp)$$
on $\Spec A$. 
This means that for $\fp \in A$, 
 there exists an open neighbouhood $U$ of $\fp$ in $\Spec A$ such that  
 $\delta_{k(\fq)} (R_\fq) \leq  \delta_{k(\fp)} (R_\fp)$ for each $\fq \in U$.

\begin{theorem}\label{thm.scont}
Let $\varphi_A: P_A \to \wR_A$ be a family of parameterizations as in Definition \ref{def.fampar} and fix  $\fp \in \Spec A$. Assume that 
$\varphi_{\fp}:  P_\fp \to  \wR_\fp$  is  a parameterization of  the reduced curve singularity $R_\fp \cong \varphi_{\fp} (P_\fp)$ (satisfying condition (\ref{cond}) of Remark \ref{rm.cond}) 
with  $\dim_{k(\fp)} (\wR_\fp/R_\fp) < \infty$.
Then there exists an open neighbourhood $U \subset \Spec A$ of $\fp$  such that
\begin{enumerate}
\item  $R_\fq  \hookrightarrow \wR_\fq$ is the normalization of $R_\fq$ 
  for each $\fq \in U$ and 
 \item $\fq \mapsto \delta_{k(\fq)} (R_\fq)$  is upper semicontinuous on $U$.
\end{enumerate}
 \end{theorem}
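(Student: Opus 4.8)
The strategy is to reduce the semicontinuity of $\delta$ to the semicontinuity results for completed fibers already available in \cite{GP20}, using Lemma \ref{lem.fiber}(4) as the bridge. First I would observe that by hypothesis $\wR_\fp/R_\fp$ is a finite-dimensional $k(\fp)$-vector space, and since $\wR_A$ is (after possibly shrinking $\Spec A$) finite over $P_A$ — this finiteness being a consequence of the Weierstrass-type argument in Lemma \ref{lem.prim1}, applied fiberwise and spread out — the $P_A$-module $M := \wR_A/R_A$ is finitely presented over $A[[x]]$. Then Lemma \ref{lem.fiber}(4) gives $\widehat M(\fp) = M\,\hat\otimes_A k(\fp) = \wR_\fp/R_\fp$, so $\dim_{k(\fp)}\widehat M(\fp) = \delta_\fp := \dim_{k(\fp)}(\wR_\fp/R_\fp)$, which by Corollary \ref{cor.deter}'s hypothesis and Lemma \ref{lem.prim2} equals $\delta_{k(\fp)}(R_\fp)$ \emph{provided} $R_\fp \hookrightarrow \wR_\fp$ is the normalization. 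Thus part (2) would follow from the semicontinuity of $\fq \mapsto \dim_{k(\fq)}\widehat M(\fq)$, which is exactly (a special case of) the main semicontinuity theorem of \cite{GP20} for completed fibers of a finitely presented module; the only thing to check is that the cited theorem applies to a module supported in the "origin direction" $\langle x\rangle$ over an arbitrary Noetherian base, with no flatness assumed.

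The remaining issue is part (1): I must produce an open $U\ni\fp$ on which $R_\fq\hookrightarrow\wR_\fq$ is \emph{the} normalization, equivalently (by Lemma \ref{lem.prim2}, conditions (ii)/(v)) on which $\delta_{\varphi_\fq}<\infty$, equivalently (Proposition \ref{prop.charnor}, condition (iv)) on which $R_\fq\hookrightarrow\wR_\fq$ is birational. Here I would argue as follows. Finiteness of $\delta_{\varphi_\fp}$ means $\wR_\fp/R_\fp$ has finite length over $P_\fp$, i.e. is supported only at the closed point $\langle x\rangle$; equivalently, some power $\varphi_\fp(x_i)^k$ annihilates $\wR_\fp/R_\fp$, i.e. $t_j^{N}\in R_\fp$ for all $j$ and some $N$ (the conductor is nonzero in every branch). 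I would lift this: there is an element $g\in R_A$ with $\varphi_A(g) = (t_1^{N},\dots,t_r^{N}) + (\text{higher order, branchwise})$ modulo $\langle x\rangle$-corrections, so that after shrinking $A$ the image of $g$ generates the $\langle t_j\rangle$-adic topology in each $\wR_{\fq,j}$; more cleanly, the condition "$t_j^{N}\in R_\fq$ for all $j$" is, for fixed $N$, an \emph{open} condition on $\fq$ because it amounts to the surjectivity of a map of finitely presented $A$-modules on the fiber, and surjectivity of a map of coherent sheaves is open (Nakayama + finite presentation). On that open set $\wR_\fq/R_\fq$ is $\langle x\rangle$-torsion, hence finite-dimensional over $k(\fq)$ (using again that $\wR_\fq$ is finite over $P_\fq$), so $\delta_{\varphi_\fq}<\infty$ and part (1) follows from Lemma \ref{lem.prim2}. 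Combining with Remark \ref{rm.cond} (which already gives an open set where condition (\ref{cond}) persists and the branch number is locally constant), we intersect all these opens to get the desired $U$.

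\textbf{The main obstacle.} I expect the genuinely delicate point to be the interaction between completion and base change on the \emph{non-maximal} primes: as emphasized in Remark \ref{rm.fiber}, $R_\fq \neq R_A\otimes_A k(\fq)$ and $R_A\hat\otimes_A k(\fq)\to R_\fq$ need not be injective, so one cannot naively identify "$\delta$ of the fiber" with "$\delta$ of a literal fiber of a morphism." The resolution is to \emph{never} work with $R_A\hat\otimes_A k(\fq)$ but only with $M = \wR_A/R_A$, whose completed fiber \emph{is} correctly $\wR_\fq/R_\fq$ by Lemma \ref{lem.fiber}(4) — this is the whole reason that lemma is stated. The second subtlety is purely one of "spreading out": the finiteness of $\wR_A$ over $P_A$ and the finite presentation of $M$ must be arranged after shrinking $\Spec A$ around $\fp$, invoking the fiberwise Weierstrass finiteness of Lemma \ref{lem.prim1} together with standard openness of the finite locus; once $M$ is a genuine finitely presented $A[[x]]$-module, the \cite{GP20} semicontinuity machinery applies verbatim. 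No estimate is hard here; the care is entirely in choosing the right module and invoking base change correctly.
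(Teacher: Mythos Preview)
Your plan has a genuine gap at precisely the point you flag as ``purely one of spreading out.'' The semicontinuity machinery of \cite{GP20} that the paper invokes (via Proposition \ref{prop.semic}) does \emph{not} apply to an arbitrary finitely presented $A[[x]]$-module: it requires an \emph{algebraic} presentation, and to obtain one the paper passes to the Henselization $A^h$ of $A_\fp$ and uses that quasifinite implies finite for Henselian algebras of finite type (\cite[Proposition 1.5]{KPP78}). Both steps --- working in $A^h\langle x\rangle$ and descending to an \'etale neighbourhood --- need the $\varphi_{A,j}(x_i)$ to be algebraic power series; for arbitrary $\varphi_{A,j}(x_i)\in A[[t_j]]$ you cannot even form the Henselian map $\varphi_{A^h}^h$, and your proposed Zariski-local Weierstrass argument for finiteness of $\wR_A$ over $P_A$ does not go through over a general (non-local, non-complete, non-Henselian) Noetherian $A$: the lower-order coefficients of $\varphi_{A,j}(x_i)$ lie in $\fp$ but are not nilpotent, so there is no Weierstrass preparation after merely inverting the leading coefficient. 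Consequently neither the hypothesis of Lemma \ref{lem.fiber} nor that of \cite[Theorem 42]{GP20} is available to you.

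The paper's remedy is exactly the ingredient you omit: the determinacy bound of Corollary \ref{cor.deter}. One sets $N=4\delta_{k(\fp)}(R_\fp)-2$, replaces $\varphi_A$ by its degree-$N$ truncation $\bar\varphi_A$ (a \emph{polynomial} map, to which Proposition \ref{prop.semic} applies), gets $\delta_{\bar\varphi_\fq}\le\delta_{\bar\varphi_\fp}=\delta_{\varphi_\fp}$ on an open $U$, and then observes that this very inequality forces $\bar\varphi_\fq$ to be $N$-$\kl$-determined as well, whence $\delta_{\varphi_\fq}=\delta_{\bar\varphi_\fq}$. Part (1) then falls out immediately from Lemma \ref{lem.prim2} once $\delta_{\varphi_\fq}<\infty$; your separate openness argument for ``$t_j^N\in R_\fq$'' is both unnecessary and, as stated, relies on the same unestablished finite-presentation hypotheses. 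In short, the truncation via determinacy is not a convenience but the crux of the proof.
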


\begin{proof}
By Lemma  \ref{lem.prim2}  we have
$\dim_{k(\fp)} (\wR_\fp/R_\fp) =\delta_{k(\fp)} (R_\fp)$ and  
with $N:= 4\delta_{k(\fp)} (R_\fp)-2$ we get from 
Corollary  \ref{cor.deter} that $\varphi_\fp$ is $N-\kl$-determined. 
We define 
$$ \bar \varphi_A: P_A \to \wR_A, \ x_i \mapsto \bar \varphi_A(x_i)$$
with
$\bar \varphi_A (x_i) =  \varphi_A (x_i) \text { mod } t_1^{N+1}A[[t_1]]\oplus ... \oplus t_r^{N+1}A[[t_r]]$,
identified with its terms up to order
$N$ in $\wR_A = A[[t_1]]\oplus ... \oplus A[[t_r]]$.

For $\fq \in \Spec A$ we have $\bar \varphi_A \hat \otimes_A k(\fq) = \bar \varphi_\fq: P_\fq \to \wR_\fq$, with $\bar \varphi_\fq (x_i) =  \varphi_\fq (x_i) \text { mod } \widetilde \fm^{N+1}$ and $ \bar \varphi_\fp \sim_\kl \varphi_\fp$
by Corollary  \ref{cor.deter}.
$ \bar \varphi_A$ is a polynomial map and Proposition 
\ref{prop.semic} below implies  $\delta_{\bar\varphi_\fq} = \dim_{k(\fq)} (\wR_\fq / \bar\varphi_\fq(P_\fq) \leq \delta_{\bar\varphi_\fp}$ for $\fq$  in some neighbourhood $U$ of $\fp$. Hence $\bar\varphi_\fq$ is 
$N-\kl$-determined for all $\fq \in U$. Since the delta invariant is invariant under $\kl$-equivalence, we get
$$  \delta_{k(\fp)}(R_\fp) = \delta_{\varphi_\fp} =  \delta_{\bar\varphi_\fp} \geq \delta_{\bar\varphi_\fq}
=  \delta_{\varphi_\fq} = \delta_{k(\fq)} (R_\fq)$$
for $\fq \in U$. Together with Lemma  \ref{lem.prim2} this proves the theorem.
\end{proof}

\begin{proposition} \label{prop.semic}
With the notations and assumptions of Theorem \ref{thm.scont}, assume moreover that  $\varphi_A: P_A \to \wR_A$ is defined by algebraic power series, i.e., 
$\varphi_{A,j}(x_i) \in A\langle t_j\rangle$ (e.g.  $\in A[t_j]$) for $j=1,...,r, \ i=1,...,n$.

Then there exists an \'etale map $A \to B$, with $\Spec B$ 
an \'etale neighbourhood of $\fp$, such that for the induced map
$$\varphi_{B}^h:P_B^h := B\langle x_1,\ldots,x_n \rangle  \to \wR_{B}^h:=B\langle t_1\rangle\oplus ... \oplus B\langle t_r\rangle$$
$ \wR_B^h $ is module-finite over $P_B^h$. 
Moreover, there exists an open neighbourhood $U \subset \Spec A$ of $\fp$  such that  for each $\fq \in U$ 
we have
 $\delta_{k(\fq)} (R_\fq) \leq  \delta_{k(\fp)} (R_\fp)$.
\end{proposition}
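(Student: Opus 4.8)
The plan is to reduce the semicontinuity statement to the semicontinuity results of \cite{GP20} by passing from the formal setting $A[[t_j]]$ to the algebraic/henselian setting $A\langle t_j\rangle$, where the relevant modules are finitely presented and the normalization/delta behave well in families. First I would record that, since $\varphi_{A,j}(x_i)\in A\langle t_j\rangle$ for all $i,j$, the map $\varphi_A$ already restricts to a map $\varphi_A^h:P_A^h:=A\langle x\rangle\to \wR_A^h:=A\langle t_1\rangle\oplus\cdots\oplus A\langle t_r\rangle$ of algebraic power series rings, and that completing along $\langle x\rangle$ recovers $\varphi_A$; hence $\delta_{\bar\varphi_\fq}$ computed formally agrees with the one computed henselian-locally (using that $\delta_{k(\fp)}(R_\fp)=\delta_{k(\fp)}(\hat R_\fp)$ as in the Remark after Definition \ref{def.delta}, and the fact that henselization does not change fibers over primes). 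So it suffices to prove the inequality after replacing $A[[t_j]]$ by $A\langle t_j\rangle$.

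Next I would establish the first assertion: module-finiteness of $\wR_B^h$ over $P_B^h$ after an étale base change $A\to B$. The point is that finiteness of $\wR_\fp$ over $P_\fp$ (which holds by Lemma \ref{lem.prim2} under condition (\ref{cond})) is an open condition that, for algebraic power series, can be realized by an étale neighbourhood: choose, for each branch $j$, an index $i(j)$ with $\varphi_{\fp,j}(x_{i(j)})\neq 0$, say of order $m_j$; then a Weierstrass-type argument shows $A\langle t_j\rangle$ is finite over $A\langle x\rangle$ near $\fp$ once the leading coefficient of $\varphi_{A,j}(x_{i(j)})$ in degree $m_j$ becomes a unit, which one arranges by inverting that coefficient and, if necessary, performing a finite étale extension to split off the relevant factor (Weierstrass preparation in the henselian/algebraic category, cf. the Weierstrass Finite Theorem used in Lemma \ref{lem.prim1}). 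This gives $B$ with $\wR_B^h$ module-finite over $P_B^h$.

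Now, over $B$, the module $M:=\wR_B^h/\varphi_B^h(P_B^h)$ is a finitely presented $P_B^h$-module, and by the analogue of Lemma \ref{lem.fiber}(4) in the algebraic setting its completed fibre over any prime $\fq$ is $\wR_\fq/R_\fq$, whose $k(\fq)$-dimension is $\delta_{k(\fq)}(R_\fq)=\delta_{\varphi_\fq}$ when finite. Then I would invoke the semicontinuity theorem of \cite{GP20} for the fibrewise length (or $k(\fq)$-dimension) of a finitely presented module, concentrated set-theoretically on the section $\langle x\rangle$, to conclude that $\fq\mapsto\dim_{k(\fq)}(\wR_\fq/R_\fq)$ is upper semicontinuous on an open neighbourhood of the image of $\fp$ in $\Spec B$; descending along the étale (hence open) map $\Spec B\to\Spec A$ yields the open $U\subset\Spec A$ with $\delta_{k(\fq)}(R_\fq)\le\delta_{k(\fp)}(R_\fp)$ for $\fq\in U$.

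The main obstacle I expect is the étale-base-change step guaranteeing module-finiteness of $\wR_B^h$ over $P_B^h$: formal finiteness over $P_\fp$ does not obviously spread out, because the Weierstrass division needed to exhibit $A\langle t_j\rangle$ as finite over $A\langle x\rangle$ requires the relevant coefficient of $\varphi_{A,j}(x_{i(j)})$ to be a unit and the corresponding monic factor to exist over the base — neither is automatic over a general Noetherian $A$, and handling the case where several branches share the same $t_j$-variable behaviour, or where the order $m_j$ jumps, is delicate. Once finiteness is in place, the remaining steps are essentially bookkeeping plus a direct citation of the semicontinuity machinery of \cite{GP20}.
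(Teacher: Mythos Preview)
Your overall strategy is correct and matches the paper's: reduce to the algebraic/henselian setting where the cokernel is finitely presented, then cite the semicontinuity machinery of \cite{GP20} and descend along the \'etale map. The identification of fibres via Lemma~\ref{lem.fiber} and of $\dim_{k(\fq)}\wR_\fq/R_\fq$ with $\delta_{k(\fq)}(R_\fq)$ via Lemma~\ref{lem.prim2} is exactly what the paper does at the end.

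The one substantive difference is in how module-finiteness of $\wR_B^h$ over $P_B^h$ is obtained, and this is precisely the step you flag as the main obstacle. You propose a direct Weierstrass preparation: invert the lowest nonvanishing coefficient of $\varphi_{A,j}(x_{i(j)})$ and possibly pass to a finite \'etale cover to split off a monic factor. The paper instead passes first to the Henselization $A^h$ of $A_\fp$, where $P_{A^h}^h=A^h\langle x\rangle$ is a local Henselian ring; there the hypothesis $\varphi_{\fp,j}\neq 0$ gives $\dim_{k(\fp)}\wR_{A^h}^h/\varphi_{A^h}(\langle\fm,x\rangle)\wR_{A^h}^h<\infty$, so $\wR_{A^h}^h$ is quasifinite and hence finite over $P_{A^h}^h$ by \cite[Proposition~1.5]{KPP78}. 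Then, since $A^h$ is a filtered colimit of \'etale $A_\fp$-algebras, the finitely many data needed already live over some \'etale $A_\fp\to D$, and \cite[Proposition~28(9)]{GP20} produces an \'etale $A\to B$ with $D=B_\fp$. This route completely sidesteps the delicacy you anticipate (no need to track orders $m_j$, no case analysis when orders jump, no explicit factorization), at the cost of invoking the general ``quasifinite $\Rightarrow$ finite'' result for Henselian pairs. A minor further difference: the paper then completes $\langle x\rangle$-adically to $P_B^\wedge=B[[x]]$, noting that the presentation matrix of $\wR_B^h/R_B^h$ has algebraic entries, and applies \cite[Theorem~42]{GP20} in that form together with \cite[Lemma~39]{GP20} to compare with $(\wR_A/R_A)\hat\otimes_A k(\fq)$; you stay in the henselian category throughout, which should also work but is not how the citation is packaged.
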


Note that in general, $ \wR_A $ is finite over $P_A$ $\Leftrightarrow$ $\wR_A /R_A$ is finite over $R_A$ $\Leftrightarrow$  $\wR_A /\kc_A$ is finite over $R_A$,  with $\kc_A:= \Ann_{R_A} (\wR_A /R_A)$.

\begin{proof} 
Let $A^h$ be the Henselization of the local ring $A_{\fp}$ and let
$$\varphi_{A^h}^h:P_{A^h}^h:=A^h\langle x_1,\ldots,x_n \rangle  \to \wR_{A^h}^h:=A^h\langle t_1\rangle\oplus ... \oplus A^h\langle t_r\rangle$$ 
be the canonical extension of $\varphi_A$. 
$A^h$ is a local ring with maximal ideal $\fm = \fp A^h, A^h/\fm = k(\fp)$,
$P^h_{A^h}$ is a local Noetherian Henselian ring with maximal ideal $ \langle \fm, x\rangle$ and residue field $k(\fp)$, and 
$\wR_{A^h}^h$ is a Henselian semilocal ring with radical 
$\langle\fm ,x\rangle \wR_{A^h}^h= \langle \fm, t_1\rangle\oplus ... \oplus \langle \fm,t_r\rangle$.
Both are $A^h$-algebras of finite type and then
$\wR^h_{A^h}$ is also a finite type $P^h_{A^h}$-algebra (here finite type means finite type in the Henselian sense\,\footnote{ Let $A$ be a  Henselian ring. An $A$-algebra $R$ is an $A$-algebra of finite type in the Henselian sense if $R \cong A\langle y_1,...,y_s\rangle$ for suitable $y_1,...,y_s \in R$.}).
Since $\varphi_\fp$ is a parameterization,   $\varphi_{\fp,j} \neq 0$ for all $j$ and hence 
$\dim_{k(\fp)} (\wR_\fp/ \varphi_\fp(\x)\wR_\fp) =  \dim_{k(\fp)} (\wR_{A^h}^h/ \varphi_{A^h}(\langle \fm, x\rangle) \wR_{A^h}^h)< \infty$, saying that 
$\wR_{A^h}^h$ is a quasifinite and hence a finite $P_{A^h}^h$-module (by \cite[ Proposition 1.5 ]{KPP78}).

By definition of the Henselisation $A^h$, there exists an \'etale ring map $A_\fp \to D$ inducing 
$A_\fp/\fp A_\fp \cong D/\fp D$ such that $\varphi_{A^h}^h(x_i) \in B\langle t_1\rangle\oplus ... \oplus B\langle t_r\rangle$ for all $i$.
By  \cite[Proposition 28 (9)]{GP20} there is an  \'etale
map $A \to B$ with $D=B_\fp$. 
By Lemma 33 of \cite{GP20} there exists 
$\fb \in \Spec B$  such that $\fb \cap A = \fp.$ 
Let 
$\varphi_{B}^h:P_B^h := B\langle x_1,\ldots,x_n \rangle  \to \wR_{B}^h:=B\langle t_1\rangle\oplus ... \oplus B\langle t_r\rangle$
 be the map induced by
$\varphi_{A}$ and set $R_B^h := P_B^h/Ker (\varphi_B^h)$. It follows that $\wR_{B}^h$ and hence $\wR_{B}^h/R_B^h$ is a finite $P_B^h$-module. Thus  $\wR_{B}^h/R_B^h$ has a 
finite presentation  over $P_B^h$ with presentation matrix 
$T: (P_B^h)^p \to (P_B^h)^q$.

Let $P^\wedge_B := B[[x_1,\ldots,x_n]]$  be the $\x$-adic completion of $P_B^h$. We get an induced map 
$\varphi^\wedge_{B}:P_B^\wedge   \to \wR^\wedge := B[[t_1]]\oplus ... \oplus B[[ t_r]]$ and set $R^\wedge_B := P^\wedge_B/Ker (\varphi^\wedge_B)$.
Then $ \wR^\wedge/R^\wedge_B$ (the $\x$-adic completion of $\wR_{B}^h/R_B^h$) has the algebraic presentation
$T: (P^\wedge_B)^p \to (P^\wedge_B)^q$. 
 We can now apply \cite[Theorem 42]{GP20} and get that
$\dim_{k(\fc)} (\wR^\wedge_B/R^\wedge_B)\hat\otimes_B k(\fc)$ 
is semicontinuous for $\fc$ in some open neighbourhood $\tilde U \subset \Spec B$ of $\fb$. 

 Since $\pi : \Spec B \to \Spec A$ is \'etale it is open,
$U := \pi(\tilde U)$ is an open neighbourhood of $\fp$ in $\Spec A$, 
and for any $\fq \in U $ there exists a $\fc \in \tilde U$ with $\fc \cap A = \fq$.
From Lemma \cite[ Lemma 39]{GP20} we obtain 
\begin{align*}
\dim_{k(\fc)} (\wR^\wedge_B/R^\wedge_B)\hat\otimes_B k(\fc) &=
\dim_{k(\fc)} (\wR_B^h/R_B^h)\otimes^h_B k(\fc)\\
& = \dim_{k(\fq)} (\wR_A/R_A)\hat\otimes_A k(\fq).
\end{align*}
We have $(\wR_A/R_A)\hat\otimes_A k(\fq)=  \wR_\fq/R_\fq$
by Lemma \ref{lem.fiber}   and 
$\dim_{k(\fq)}  \wR_\fq/R_\fq =\delta_{k(\fq)} (R_\fq)$
by Lemma \ref{lem.prim2}. 
This implies the semicontinuity of $\delta_{k(\fq)} (R_\fq)$ on $U$. 
\end{proof}

\begin{remark}{\em In contrast to delta we cannot expect any semicontinuity for the conductor.
In Example \ref{ex.1} (2)
$c_\k(R_{\fp_\lambda})$ is not upper semicontinuous. In general $c_\k$
is also not lower semicontinuous (e.g.
for plane curves $c_\k = 2\delta_\k$ and $\delta_\k$ is  not lower semicontinuous.)  
}
\end{remark}

Two important examples are $A=\k[s]$ and $A=\Z$, treated in the examples below.

\begin{example}\label{ex.1}
{\em  We give two concrete examples with  $A=\k[s]$. In both examples 
delta is of course upper semicontinuous, but in the second example the conductor is not.

(1) Consider the family of parameterizations over $A$, 
\begin{align*}
\varphi_A & :  P_A = A[[x,y,z,u]]  \to \wR_A = A[[t]],\\
x & \mapsto t^5, y\mapsto t^6, z  \mapsto st^4 + t^8,  u\mapsto t^9,  
\end{align*} 
shortly 
$\varphi_A(s,t) = (t^5, t^6 , st^4 + t^8 , t^9)$,
with  $\varphi_A (P_A)=\k[s][[t^5, t^6 , st^4 + t^8 , t^9]] \subset \k[s][[t]]$.  For  the maximal ideals (closed points in $\Spec A$) $\fp_\lambda=\langle s-\lambda \rangle$, $\lambda \in \k$, we have thus (in the notations of Definition \ref{def.fampar}) the family
$$\varphi_{\fp_\lambda}  :   P_{\fp_\lambda}  = \k[[x,y,z,u]]  \to  \wR_{\fp_\lambda}= \k[[t]],\ \varphi_{\fp_\lambda}(t) = (t^5, t^6 , \lambda t^4 + t^8 , t^9)$$ 
of reduced curve singularities $R_{\fp_\lambda} =  \k[[x,y,z,u]]/Ker(\varphi_{\fp_\lambda}) \cong \k[[t^5, t^6 , \lambda t^4 + t^8 , t^9]] \subset \k[[t]]$. 
The parameterization is primitive and $\wR_{\fp_\lambda}$ is the normalization of $R_{\fp_\lambda}$ (Lemma \ref{lem.prim2}).
We compute easily
$\delta_\k(R_{\fp_\lambda})  = 5$ if $\lambda = 0$ and
$\delta_\k(R_{\fp_\lambda}) = 4$ if $\lambda \neq 0$. Moreover, 
$c_\k(R_{\fp_\lambda}) = 8$ for all $\lambda$.

Every closed point of $\Spec A$ is in the closure of the generic point $\eta = \langle 0 \rangle$.  Theorem \ref{thm.scont} implies that $\delta_{\k(t)} (R_\eta)  \leq \delta_\k(R_{\fp_\lambda})$,  $R_\eta=k(s)[[t^5, t^6 , st^4 + t^8 , t^9]]$,
for any $\lambda \in \k$.

 
(2) Let $\varphi_A(s,t) =(t^5,t^6,st^7+t^8,t^9)$. Then for $\fp=\langle s-\lambda \rangle$, $\lambda \in \k$, we get 
 $R_{\fp_\lambda} \cong  \k[[t^5, t^6 , \lambda t^7 + t^8 , t^9]]$ and
we compute $\delta_\k(R_{\fp_\lambda}) = 5$ for all $\lambda\in \k$. On the other hand we get $c_\k(R_{\fp_\lambda}) = 8$  if $\lambda = 0$ and $c_\k(R_{\fp_\lambda}) = 9$ if $\lambda \neq 0$.
 }
\end{example}

\begin{example}\label{ex.2}
{\em  We illustrate the difference between $R_\fp$ and   $R_A\hat\otimes_A k(\fp)$ from Remark  \ref{rm.fiber}.
Consider the previous example $\varphi_A(s,t) = (t^5, t^6 , st^4 + t^8 , t^9)$, and
use {\sc Singular} \cite{DGPS} for the following computations. 

We eliminate $t$ from $\langle x - t^5, y - t^6, z - st^4 + t^8,  u- t^9\rangle$ 
and get $R_A= \k[s][[x,y,z,u]]/I$ with $I := Ker(\varphi_A )= \langle yz-xu-x^2s,
yu-x^3,
zu-xy^2-xzs+us^2,
u^2-x^2z+xus,
y^3-x^2z+xus,
xz^2-y^2u-zus-xy^2s,
z^3-yu^2-2y^2zs-xyus-x^2ys^2-y^2s^3\rangle$.  

For $\fp_\lambda = \langle s-\lambda\rangle$, $\lambda \in \k$, we have $R_\lambda := R_A\hat\otimes_A k(\fp_\lambda)= \k[[x,y,z,u]]/(I|_{s=\lambda})$. 
If $\lambda \neq 0$
$R_{\lambda}$ is reduced and coincides with the reduced curve $R_{\fp_\lambda} $ 
considered in Example \ref{ex.1}.
But for $\lambda=0$ the ring $R_{0}$ has an embedded component (it can be computed as  $\langle u,
z^3,
yz,
y^3,
xz^2,
xy^2,
x^2z,
x^3
 \rangle$), while the reduction of  $R_{0}$ coincides with  $R_{\fp_0}$. 
}\end{example}

\begin{example}\label{ex.3}
{\em
The case $A=\Z$ is of special interest.
Consider a parameterization of an irreducible curve singularity with integer coefficients, $t \mapsto (\varphi^1(t),...,\varphi^n(t))$ with $\varphi^i(t) \in \Z[[t]]$. For $\fp \subset \Z$ a prime ideal we write
$\varphi^i_p \in \F_p[[t]]$ if  $\fp = \langle p\rangle$ with $p$ a prime number, and  $\varphi_0^i\in \Q[[t]]$ 
if $\fp = \langle 0\rangle$. 

Theorem \ref{thm.scont} implies:
If for some fixed prime $p$ the parameterization $\varphi_p$ is primitive, then $\dim_{\F_\fp} \F_p[[t]]/ \F_\fp[[\varphi_\fp^1(t),...,\varphi_\fp^n(t))]]= \delta_{\F_\fp}(R_p)$ is  finite and $\delta_{\F_\fp}(R_p) \geq \delta_{\Q}(R_0)$ as well as $\delta_{\F_\fp}(R_p) \geq  \delta_{\F_\fq}(R_q)$ for all except finitely many primes $q\in \Z$. In particular, if there exists a prime number $p$ with
$\delta_{\F_\fp}(R_p) < \infty$ then $\delta_\Q(R_0) <\infty$. Conversely,
if $\delta_\Q(R_0) $ is  finite, then $\delta_\Q(R_0) \geq\delta_{\F_q}(R_q) $
(and hence ``='') for all except finitely many prime numbers $q\in \Z$.

The same result holds for algebraic power series, i.e., if we replace 
$[[ ... ]]$ by $\langle ... \rangle$.
}
\end{example}

\begin{remark} [{\bf Computational consequences}] \label{rm.comp}
{\em
Let $A$ be an integral domain and $\eta=\langle 0 \rangle$
the generic point of $\Spec A$. Then any other point of  $\Spec A$ is in the closure of $\eta$ and it follows for a family of parameterizations
 $\varphi_A: P_A \to \wR_A$ (assumptions as in Theorem \ref{thm.scont}) that 
\begin{align}\tag{**}\label{**}
 \delta_{k(\eta)} (R_\eta)  \leq \delta_{k(\fp)}(R_{\fp})
 \end{align}
 for any $\fp \in \Spec A$.  
 
  There exist several algorithms to compute $\delta_{\k}$ for a given concrete parmeterization, eg. by computing the normalization as in \cite{GP08} by the Grauert-Remmert algorithm or by the Singh-Swanson algorithm in positive characteristic,
 or by computing a Puiseux expansion (in char 0) resp. a Hamburger Noether expansion (in char $> 0$) as in \cite {Ca80}  or by still other algorithms (see also the Manual of  {\sc Singular} \cite{DGPS}). 

\begin{itemize}
\item {\em Computing in special points:}
In applications it is often required to know the delta invariant (or at least a bound for it) at a generic point $\eta$. However the computation over 
$k(\eta) = Quot(A)$ are often extremely expansive 
(e.g. $k(\eta)=\k(t)$ for $A=\k[t]$ or $k(\eta)=\Q$  for $A=\Z$), while the computations at a special point (e.g. $\lambda \in \k$ or $p$ a prime number) are usually much faster and this can be used as a bound for delta at the generic point.
Depending on the chosen algorithm, the bound computed at a special point may be used also for early termination of the computation of delta at the generic point. 

Our result says also that delta at the generic point coincides with delta at the special points in an open dense subset of $\Spec A$ (a result which can also be obtained by the theory of Gr\"obner or standard bases)
but this open set is usually not known.
We like to emphasize, that the estimate (\ref{**}) is true for {\em every} special point, a result that cannot be deduced by Gr\"obner basis theory.
\item {\em Cutting off higher order terms during computations:} By Corollary \ref{cor.deter} the parameterization $\varphi_\fp$ of a reduced curve singularity $R_\fp$ is $(4\delta-2)$-determined, $\delta:=\delta_{k(\fp)}(R_\fp)$. 
The semicontinuity implies that for any point $\fq$ in some neighbourhood $U$ of $\fp$ (e.g. for  the generic point $\fq = \eta$)  in the power series 
$\varphi_{\fq,j}(x_i) \in k(\fq)[[t]]$ the terms of degree $>4\delta-2$ can be cut off without changing $\delta_{\varphi_\fq}$. 

It is well known, that intermediate terms of very high degree may occur during a Gr\"obner or Sagbi basis computation, which cancel at the end.
The bound $4\delta-2$ can be used to cut off higher order terms  that might appear during the computation of any $\kl$ (or $\ka$ or contact)-invariant of $\varphi_\fq$.
\end{itemize}
 }
  \end{remark}
  
\bigskip

{\bf The analytic case:}
An analogous semicontinuity result for complex analytic families (with a much easier proof) may be of independent interest. 
A morphism of analytic $\C$-algebras 
$\varphi :  \C\{x_1,...,x_n\} \to \C\{t_1\}\oplus ... \oplus  \C\{t_r\}$ 
is called a parameterization of the analytic curve singularity 
$Im (\varphi)$ with $r$ branches if it satisfies the conditions as in  Definition \ref{def.param}, with formal power series replaced by convergent ones. The notion of a primitive parameterization carries over  to analytic $\C$-algebras 
and Lemma \ref{lem.prim2} holds also in the analytic case (in particular, 
$\varphi$ is primitive $\Leftrightarrow$  $\varphi$ is the normalization map 
$\Leftrightarrow$ 
$\dim_\C \C\{t_1\}\oplus ... \oplus  \C\{t_r\} / Im(\varphi) < \infty).$

Let $(S,0)$  be a complex analytic germ and 
$$\varphi_S: \ko_{S,0}\{x_1,...,x_n\} \to \ko_{S,0}\{t_1\}\oplus ... \oplus  \ko_{S,0}\{t_r\}$$ 
an analytic $ \ko_{S,0}$-algebra morphism. 
For $S$ a representative of $(S,0)$  and $s\in S$ let 
\begin{align*}
 \varphi_s  = \varphi_S \otimes_{\ko_{S,s}} \C: &
\ \C\{x_1,...,x_n\} \to \C\{t_1\}\oplus ... \oplus  \C\{t_r\}\\
 \varphi_{s,j}: & \ \C\{x_1,...,x_n\} \to \C\{t_j\},\  j=1,...,r,
 \end{align*}
be the induced maps. 
Then $\ko_{C_s,0}:= \C\{x_1,...,x_n\} /Ker ( \varphi_s ) \cong
Im (\varphi_s) = \C \{\varphi_{s}(x_1),...,\varphi_{s}(x_n)\} \subset \C\{t_1\}\oplus ... \oplus  \C\{t_r\}$ is the local ring of the germ 
$(C_s,0) =V(Ker(\varphi_{s})) \subset (\C^n,0)$.

\begin{proposition}\label{prop.scont-an} 
With the above notations assume that $\varphi_0$ is a primitive parameterization of the reduced curve singularity  with $r$ branches 
$(C_0,0)$. Then,
 for $s$ in a sufficiently small neighbourhood of $0\in S$, $(C_s,0)$ is a reduced curve singularity with with $r$ branches satisfying
$$\dim_\C \C\{t_1\}\oplus ... \oplus  \C\{t_r\} / \C \{\varphi_{s}(x_1),...,\varphi_{s}(x_n)\}  = \delta_{\mathbb C} (\ko_{C_s,0}) < \infty$$
 and 
$\delta_{\mathbb C} (\ko_{C_s,0}) \leq \delta_{\mathbb C} (\ko_{C_0,0})$.
\end{proposition}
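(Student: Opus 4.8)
The plan is to follow the strategy of Theorem~\ref{thm.scont} but to replace the completed tensor product formalism of \cite{GP20} by the classical coherence and finiteness theorems of complex analytic geometry; this is what makes the analytic case considerably easier, and in particular the determinacy bound of Corollary~\ref{cor.deter} is not needed. Write $P_S:=\ko_{S,0}\{x_1,\dots,x_n\}$ and $\wR_S:=\ko_{S,0}\{t_1\}\oplus\cdots\oplus\ko_{S,0}\{t_r\}$, and set $M_S:=\wR_S/\varphi_S(P_S)$. For a point $s$ of a small representative of $(S,0)$, base change of the right-exact sequence $P_S\xrightarrow{\varphi_S}\wR_S\to M_S\to 0$ along $\ko_{S,s}\to\C$ gives $P_s=\C\{x\}$, $\wR_s=\C\{t_1\}\oplus\cdots\oplus\C\{t_r\}$, and, by right-exactness of $\otimes$, $M_S\otimes_{\ko_{S,s}}\C=\wR_s/\varphi_s(P_s)$. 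In particular $M_0=\wR_0/\varphi_0(P_0)$, and since $\varphi_0$ is primitive the analytic form of Lemma~\ref{lem.prim2} gives $\dim_\C M_0=\delta_{\varphi_0}=\delta_\C(\ko_{C_0,0})=:\delta_0<\infty$.

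The first step is to make $\wR_S$ module-finite over $P_S$ after shrinking. Since $\varphi_0$ is a parameterization we have $\varphi_{0,j}\neq 0$ for each $j$, hence $\dim_\C\wR_0/\varphi_0(\x)\wR_0<\infty$ (Weierstrass). As $\varphi_S$ is an $\ko_{S,0}$-algebra map, $\wR_S\otimes_{P_S}\C=\wR_S/\varphi_S(\fm_{P_S})\wR_S=\wR_0/\varphi_0(\x)\wR_0$ is finite-dimensional over $\C$; thus $\varphi_S$ is a quasi-finite, and therefore finite, morphism of analytic $\C$-algebras by the Weierstrass finiteness theorem (cf.\ \cite[Theorem~1.10]{GLS07}). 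Consequently $M_S$ is a coherent $P_S$-module. Its fibre $M_0$ has finite $\C$-dimension, so $\Supp_{\C^n}(M_0)=\{0\}$; since the formation of support commutes with base change, the fibre of the restriction of the projection $\pi\colon(\C^n\times S,0)\to(S,0)$ to $\Supp(M_S)$ over $0\in S$ equals $\Supp_{\C^n}(M_0)=\{0\}$, so $\pi|_{\Supp(M_S)}$ is a finite morphism after shrinking. Hence $\pi_\ast M_S$ is a coherent, in particular module-finite, $\ko_{S,0}$-module.

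By the standard upper semicontinuity of fibre dimension for coherent analytic sheaves, $s\mapsto\dim_\C\bigl((\pi_\ast M_S)\otimes_{\ko_{S,s}}\C\bigr)$ is upper semicontinuous near $0$; since $(\pi_\ast M_S)\otimes_{\ko_{S,s}}\C$ contains $M_S\otimes_{\ko_{S,s}}\C=\wR_s/\varphi_s(P_s)$ as a direct summand and equals $M_0$ for $s=0$, we obtain $\dim_\C\bigl(\wR_s/\varphi_s(P_s)\bigr)\leq\dim_\C M_0=\delta_0$ for all $s$ in a neighbourhood of $0$. In particular this colength is finite, so the analytic form of Lemma~\ref{lem.prim2} shows that $\varphi_s$ is primitive; hence $\wR_s$ is the normalization of $Im(\varphi_s)=\ko_{C_s,0}$, which is therefore reduced, and $\dim_\C\bigl(\wR_s/\varphi_s(P_s)\bigr)=\delta_\C(\ko_{C_s,0})$. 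Finally, ``$\varphi_{s,j}\neq 0$ for all $j$'' and ``$\varphi_{s,j}(P)\neq\varphi_{s,j'}(P)$ for $j\neq j'$'' are open conditions on $s$, so $(C_s,0)$ has exactly $r$ branches for $s$ near $0$, exactly as in Remark~\ref{rm.cond}. Putting these together, $\delta_\C(\ko_{C_s,0})=\dim_\C\bigl(\wR_s/\varphi_s(P_s)\bigr)\leq\delta_0=\delta_\C(\ko_{C_0,0})$, which is the assertion.

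I expect the only genuine obstacle to be the first step: upgrading the fibrewise finiteness of $\varphi_0$ to module-finiteness of $\wR_S$ over $P_S$ on a representative, and deducing that $\pi_\ast M_S$ is $\ko_{S,0}$-coherent. Once this is in place the conclusion is pure semicontinuity of fibre dimension for coherent sheaves together with Lemma~\ref{lem.prim2}; the remaining ingredients — the base-change identity $M_S\otimes_{\ko_{S,s}}\C=\wR_s/\varphi_s(P_s)$, the openness of the branch conditions, and ``finite colength $\Rightarrow$ primitive'' — are routine.
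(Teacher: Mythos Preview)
Your proof is correct and follows essentially the same route as the paper: both use the Weierstrass Finiteness Theorem to pass from $\dim_\C Coker(\varphi_0)<\infty$ to coherence of $Coker(\varphi_S)$ over $\ko_{S,0}$, then invoke semicontinuity of fibre dimension for coherent sheaves together with the analytic version of Lemma~\ref{lem.prim2} and the openness of the branch conditions (Remark~\ref{rm.cond}). The paper compresses your two-step coherence argument (finiteness of $\wR_S$ over $P_S$, then $\pi_\ast M_S$ coherent over $\ko_{S,0}$) into a single citation of \cite[Theorems~1.10,~1.66,~1.67]{GLS07}, but the content is the same.
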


\begin{proof} That $(C_s,0)$ is a reduced curve singularity with $r$ branches follows as in the formal case (Remark \ref{rm.cond}). Since  primitive implies (Lemma \ref{lem.prim2})
$\dim_\C Coker (\varphi_0) < \infty$ the Weierstrass Finiteness Theorem (\cite [Theorem 1.10]{GLS07}) implies that  $Coker (\varphi_S)$ is a 
finite and hence a coherent $\ko_S$-module (by (\cite [Theorems 1.66 and 1.67]{GLS07}). 
The result follows since 
$Coker (\varphi_S) \otimes_{\ko_{S,s}} \C = Coker (\varphi_s)$ and 
$\dim_\C Coker (\varphi_s) = \delta_{\mathbb C} (\ko_{C_s,0})$ as in Lemma \ref{lem.prim2}. 
\end{proof}

\begin{remark}\label{rm.scont-an}
{\em 
Let  $\k$ be a real-valued field with a valuation and 
$A$ an  analytic $\k$-algebra, i.e.   $A\cong \k\{y\}/I$, $y = ( y_1,\ldots,y_s)$, $I$ an ideal, and  $\k\{y\}$ the convergent power series ring over $\k$ (cf. \cite{GLS07}).
 Then 
 small  $\varepsilon$-neighbourhoods in $\k^s$ are defined and 
 the same semicontinuity result for $\delta_{\k}$ as in the complex case
 (Proposition \ref{prop.scont-an}) holds more generally
for morphisms of analytic $\k$-algebras
$$\varphi_A: A\{x_1,...,x_n\} \to A\{t_1\}\oplus ... \oplus  A\{t_r\}.$$
The proof is basically the same as in the complex-analytic case. The Weierstrass Finiteness Theorem holds also in this case (\cite [Theorem 1.10]{GLS07}) and instead of the coherence theorem, one can use \cite [Lemma 1]{GP20}.

Examples of real-valued fields with valuation are:
  
- Any field  $\k$ with the trivial valuation, then $\k\{y\} =\k[[y]]$ is the formal power series ring. Finite fields have only the trivial valuation.
  
- $\k \in \{\Q, \R, \C\}$  with the absolute value as (Archimedian) valuation, then  $\k\{y\}$ is the usual convergent power series ring.  

- Another example is $\k=\mathbb Q$ with the $p$-adic valuation\footnote{ Write $q\in \mathbb Q$ as $q=p^a\cdot \frac{r}{s}$ with $r,s$ coprime to $p$, then the $p$-adic valuation is $|q|_p=p^{-a}$. }, a non-Archimedian valuation on $\mathbb Q$.
 
- The completion of a real-valued field is again a real-valued field. 
The completion of the field $\Q$ with respect to the ordinary absolute value
is $\R$, while the completion of $\mathbb Q$ with respect to the $p$-adic valuation is the field of $p$-adic numbers $\Q_p$\footnote { $\Q_p = \{\sum_{i=n}^\infty a_ip^i | n\in\Z, a_i \in \{0,...,p-1\} \} $. 
$\Q \subset \Q_p$ since $-1=\sum_0^\infty (p-1)p^i $. }.
 
The convergent power series ring over a real-valued field has similar good properties as the formal power series ring. The most important
property is the Weierstrass Finiteness Theorem (\cite [Theorem 1.10]{GLS07}). Of interest may be that $\k\{y\}$ is excellent\footnote{ For a definition of excellence see  \cite[15.51]{Stack}.} if and only if the completion of $\k$ with respect to the valuation is separable over $\k$ (cf. \cite{Sch82}).
}
\end{remark}


\noindent
\footnotesize
{Gert-Martin Greuel and Gerhard Pfister\\
University of Kaiserslautern\\
Department of Mathematics\\
Erwin-Schroedinger Str.\\
67663 Kaiserslautern\\
Germany\\
e-mail: greuel@mathematik.uni-kl.de\\
e-mail: pfister@mathematik.uni-kl.de}


\begin{thebibliography}{BdPW87}

%
\bibitem[Ca80]{Ca80} Campillo, A.: {\it Algebroid Curves in Positive Characteristic.}  Lecture Notes in Mathematics, Springer (1980).

\bibitem[Ca05]{Ca05} Castellanos, J.:
{\it The semigroup of a space curve singularity.}
Pac. J. Math. 221, 227-251 (2005). 

\bibitem[CL06]{CL06} Chiang-Hsieh, H.-J.; Lipman, J.: {\it  A numerical criterion for simultaneous normalization}, Duke Math. J., 133:2 (2006)

\bibitem[CLMN19]{CLMN19} Cogolludo-Agustin, J., L\'aszl\'o, T., Martin-Morales, J.,  N\'emethi, A.: {\it
Delta invariant of curves on rational surfaces I. The analytic approach}. arXiv:1911.07539v1  [math.AG] (2019).
	
\bibitem[DGPS]{DGPS}
Decker, W.; Greuel, G.-M.; Pfister, G.; Sch{\"o}nemann, H.: {\sc Singular  4-1-2} -- {\it A computer algebra system for polynomial computations.} http://www.singular.uni-kl.de (2019).

\bibitem[GP08]{GP08} Greuel, G.-M., Pfister, G.: \emph{A {\bf Singular} Introduction to Commutative Algebra}, 2nd Edition, Springer, Berlin (2008). With
	contributions by O. Bachmann, C. Lossen and H. Sch{\"o}nemann. 

\bibitem[GP20]{GP20} Greuel, G.-M., Pfister, G.: \emph{Semicontinuity of Singularity Invariants in Families of Formal Power Series},  arXiv:1912.05263v3  (2020). To appear in the Proceedings of the "N\'emethi60 Conference".

\bibitem[GLS07]{GLS07} Greuel, G.-M.; Lossen, C.; Shustin, E.: {\it Introduction to Singularities and Deformations.} Springer (2007).

\bibitem[HK71]{HK71}  Herzog, J.; Kunz, E.: {\it Der kanonische Modul eines Cohen-Macaulay Rings.} Lecture Notes in Mathematics 238, 
Springer (1971).

\bibitem[Hi65]{Hi65}  Hironaka, H.: {\it On the equivalence of singularities I. }
Arithmetical algebraic Geom., Proc. Conf. Purdue Univ. 1963, 153-200 (1965). 

\bibitem[KPP78]{KPP78} Kurke, H.; Pfister, G.; Popescu, D; Roczen, M.; Mostowski T.: {\em 
Die Approximationseigenschaft lokaler Ringe.} 
Lecture Notes in Mathematics 634, Springer-Verlag Berlin Heidelberg New York (1978). 


\bibitem[IIL20]{IIL20} Ito, K., Ito, T., Liedtke, C.: {\it Deformations of rational curves in positive characteristic.}  arXiv:1803.04448v5 [math.AG] (2020)

\bibitem[Mat86]{Mat86} Matsumura, H.: {\it Commutative ring theory}. Cambridge Univ. Press (1986).

\bibitem[Ng16]{Ng16} Nguyen, H. D.: {\em 
Invariants of plane curve singularities and Pl\"ucker formulas in positive characteristic.} Ann. Inst. Fourier 66, No. 5, 2047-2066 (2016). 

\bibitem[Sch82]{Sch82} Schemmel, K.-P.: {\it Eine notwendige und hinreichende Bedingung f\"ur die Approximationseigenschaft analytischer Potenzreihenringe \"uber einem K\"orper beliebiger Charakteristik.} Rev. Roum. Math. Pures Appl. 27, 875-884 (1982).

\bibitem[Stack]{Stack} {\it Stacks project.} http://stacks.math.columbia.edu.
Version c716e69b, compiled on Sep 21, 2019.

\bibitem[Te78]{Te78}  Teissier, B.: {\it The hunting of invariants in the geometry of discriminants.} In: P. Holm (ed.): Real and Complex Singularities, Oslo 1976, Northholland (1978).\\

\end{thebibliography}
\end{document}